
\documentclass[11pt]{article}
%%%%%%%%%%%%%%%%%%%%%%%%%%%%%%%%%%%%%%%%%%%%%%%%%%%%%%%%%%%%%%%%%%%%%%%%%%%%%%%%%%%%%%%%%%%%%%%%%%%%%%%%%%%%%%%%%%%%%%%%%%%%
\usepackage{amssymb}
\usepackage{amsmath}
\usepackage{amsfonts}

\setcounter{MaxMatrixCols}{10}
%TCIDATA{OutputFilter=LATEX.DLL}
%TCIDATA{Version=4.10.0.2359}
%TCIDATA{Created=Wednesday, November 07, 2012 14:10:25}
%TCIDATA{LastRevised=Friday, February 22, 2019 15:22:27}
%TCIDATA{<META NAME="GraphicsSave" CONTENT="32">}
%TCIDATA{<META NAME="DocumentShell" CONTENT="Standard LaTeX\Blank - Standard LaTeX Article">}
%TCIDATA{Language=American English}
%TCIDATA{CSTFile=40 LaTeX article.cst}

\oddsidemargin=0.5cm \textwidth=15cm \textheight=22cm
\topmargin=-0.5cm
\newtheorem{theorem}{Theorem}

\newtheorem{definition}[theorem]{Definition}
\newtheorem{example}[theorem]{Example}

\newtheorem{lemma}[theorem]{Lemma}

\newtheorem{remark}[theorem]{Remark}

\newenvironment{proof}[1][Proof]{\noindent\textbf{#1.} }{\ \rule{0.5em}{0.5em}}

\newcommand{\func}[1]{\operatorname{#1}}
\numberwithin{equation}{section}

\begin{document}

\title{Long-memory Gaussian processes governed by generalized Fokker-Planck
equations}
\author{Luisa Beghin \thanks{%
luisa.beghin@uniroma1.it, Department of Statistical Sciences, Sapienza -
University of Rome }}
\maketitle

\begin{abstract}
\ \newline
It is well-known that the transition function of the Ornstein-Uhlenbeck
process solves the Fokker-Planck equation. This standard setting has been
recently generalized in different directions, for example, by considering
the so-called $\alpha $-stable driven Ornstein-Uhlenbeck, or by
time-changing the original process with an inverse stable subordinator. In
both cases, the corresponding partial differential equations involve
fractional derivatives (of Riesz and Riemann-Liouville types, respectively)
and the solution is not Gaussian. We consider here a new model, which cannot
be expressed by a random time-change of the original process: we start by a
Fokker-Planck equation (in Fourier space) with the time-derivative replaced
by a new fractional differential operator. The resulting process is Gaussian
and, in the stationary case, exhibits long-range dependence. Moreover, we
consider further extensions, by means of the so-called convolution-type
derivative.

\emph{AMS Subject Classification (2010):} 60G15; 60G22; 34A08; 33C60; 26A33.%
\newline
\emph{Keywords:} Fractional Fokker-Planck equation; Long-range dependence;
H-functions; Convolution-type derivative.
\end{abstract}

\section{Introduction and notation}

The Ornstein--Uhlenbeck process has been widely studied and applied in many
different fields, mainly in finance and physics. Indeed, its mean-reverting
property is useful, in particular, in the financial data description, since
many economic quantities (such as interest rates) appear to be pulled back
to some average value, in the long-term period. Moreover, it is Gaussian,
Markovian and has a stationary version, which is the only nontrivial process
satisfying all the previous conditions.

It is well-known that the density of the Ornstein-Uhlenbeck (hereafter OU)
process satisfies the so-called Fokker-Planck (FP) equation, i.e.%
\begin{equation}
\frac{\partial }{\partial t}u(x,t)=\eta \frac{\partial }{\partial x}%
[xu(x,t)]+D\frac{\partial ^{2}}{\partial x^{2}}u(x,t),\qquad x\in \mathbb{R},%
\text{ }t\geq 0,\text{ }D,\eta >0,  \label{fp}
\end{equation}%
with the initial condition $u(x,0)=\delta (x)$ and the boundary condition $%
\lim_{|x|\rightarrow +\infty }xu(x,t)=0,$ for any $t\geq 0$ (see, for
example, \cite{PAV}, p.95)$.$

Fractional extensions of the previous equation have been proposed by many
authors, in different directions: in particular, when a fractional
Riemann-Liouville derivative (with respect to time) is introduced in the
right-hand side, an anomalous diffusion is obtained, which can be described
as a random time-change of the OU process by means of the inverse of an
independent stable subordinator (see \cite{MAG} and \cite{MAG2}). In \cite%
{JAN} this model is compared to the OU process driven by the $\alpha $%
-stable L\'{e}vy motion (which is governed by equation (\ref{fp}) with the
second order derivative replaced by the Riesz fractional operator). Later,
in \cite{GAJ}, the model has been generalized to the case where the
time-derivative in (\ref{fp}) is replaced by a more general
integro-differential operator with memory kernel; as a consequence, the
time-change is by means of the inverse of a general L\'{e}vy subordinator.

Another approach to fractional OU processes has been proposed in \cite{MAG3}
where the solution of the $\alpha $-stable Langevin equation (the so-called $%
\alpha $-stable OU process) is extended to the fractional case, tracing the
transition form Brownian motion to fractional Brownian motion. The resulting
processes exhibit long-range dependence, which, due to the lack of finite
second moments, must be measured by codifferences.

Other non-Gaussian OU-type processes have been obtained by applying the
Lamperti transform to the fractional Brownian motion (see \cite{CHE}) and
they are proved to display short-range dependence when the Hurst parameter $%
H $ is greater than $1/2$ (on the contrary to that obtained from the
Langevin equation with fractional noise). Also the extension of OU defined
in \cite{KAR}, by the Doob's transform of fractional Brownian motion, has
covariance function decaying exponentially for all values of $H.$

The class of time-fractional Pearson diffusions have also been studied in
\cite{LEO} and the fractional OU process is treated there as a particular
case. All these models are non-Markovian, as described in \cite{MUR}, and
non-Gaussian. Finally, we recall that fractional diffusion equations with
logarithmic-type differential operators have been studied in \cite{BEG} and,
as in the cases cited above, a random time change (or subordination) is
proved to hold for the corresponding process.

We present here different models, alternative to all those described above,
which generalize the OU process in a fractional (and non-fractional) sense,
but preserve the Gaussianity of its distribution. Moreover, contrary to the $%
\alpha $-stable OU and fractional $\alpha $-stable OU processes (in \cite%
{MAG3}), they have finite moments and, indeed, they are characterized by
means of their covariance function (since they are Gaussian).

As we will see, in our case, no random time change is entailed by the
introduction of the fractional operator, being in Fourier space. This is the
most important feature of our results and shows that correspondence between
partial differential equations with fractional (or convolution-type
derivatives) and subordination (or time-change) does not always come true.

The first model is called time-changed OU, since we prove that introducing,
in the Fourier transform of (\ref{fp}), a fractional operator entails a
deterministic time-change in the original process, expressed in terms of the
Mittag-Leffler function. Hence the process is still Gaussian, mean-reverting
and Markovian, even though, in the limit (for $t\rightarrow \infty ),$ it
does not converge to its stationary counterpart (as in the standard case),
but to a random walk. In order to correct this anomalous asymptotic
behavior, we define a second fractional OU process, whose (Gaussian)
one-dimensional density satisfies the same fractional FP equation, but is
not Markovian and has a stationary counterpart, in the limit. We show that
it presents long-range dependence, both in the covariance and in the
spectral density sense. We remark that, in all these cases, when the
fractional parameter tends to one, we recover the classical OU model.

As a motivation of the results described above, we show that the solution of
the fractional FP equation coincides with the one-dimensional distribution
of the limiting process obtained from a Poisson shot-noise, with power-law
decaying response function, when the frequency of the shocks increases and
their amplitude tends to zero.

Further extensions of the previous results are obtained, in the last
section, by generalizing the fractional operator adopted, by means of the
convolution-type derivative (see \cite{GAJ} and \cite{TOA} among the others)
and solving the corresponding FP equation (in Fourier space). The solution
coincides with the characteristic function of a generalized OU process,
which is then defined through the inverse of a general subordinator.

\

We now introduce some useful definitions and notation.

First of all, let $f:\mathbb{R}\rightarrow \mathbb{R}$ be an integrable
function belonging to the Schwartz space $\mathcal{S}(\mathbb{R})$ of
rapidly decreasing functions; then we denote by $\mathcal{F}(f;\xi
):=\int_{-\infty }^{+\infty }e^{i\xi x}f(x)dx=\widehat{f}(\xi )$ the Fourier
transform and by $\mathcal{F}^{-1}(f;x):=\frac{1}{2\pi }\int_{-\infty
}^{+\infty }e^{-i\xi x}\widehat{f}(\xi )d\xi $ its inverse. Moreover, for $%
s>0$, we denote the Laplace transform of $f$, as $\mathcal{L}%
(f;s):=\int_{0}^{+\infty }e^{-sx}f(x)dx=\widetilde{f}(s).$

Let $\alpha >0$, $m=\left\lfloor \alpha \right\rfloor +1$ and assume that $%
u:[a,b]\rightarrow \mathbb{R},$ $b>a,$\ is an absolutely continuous
function, with absolutely continuous derivatives up to order $m$ on $[a,b]$,
then, for $x\in \lbrack a,b],$
\begin{equation}
D_{x}^{\alpha }u(x):=\left\{
\begin{array}{l}
\frac{1}{\Gamma (m-\alpha )}\int_{a}^{x}\frac{1}{(x-s)^{\alpha -m+1}}\frac{%
d^{m}}{ds^{m}}u(s)ds,\qquad \alpha \notin \mathbb{N}_{0} \\
\frac{d^{m}}{dx^{m}},\qquad \alpha =m\in \mathbb{N}_{0}%
\end{array}%
\right.  \label{ca}
\end{equation}%
is the Caputo fractional derivative of order $\alpha $ (see \cite{KIL},
p.92). We define here the new following fractional operator.

\begin{definition}
\label{Def.1}
Let $\alpha \geq 0$ and $m=\left\lfloor \alpha \right\rfloor +1$. Let $%
u(\cdot ):[0,b],$ $b>0$, be a positive function, with absolutely continuous
derivatives up to order $m$ on $[0,b]$, then%
\begin{equation}
\mathcal{L}_{x}^{\alpha }u(x):=u(x)D_{x}^{\alpha }\log u(x),\qquad x\in
\mathbb{R},  \label{op}
\end{equation}%
where $D_{x}^{\alpha }$ is the Caputo fractional derivative of order $\alpha
.$
\end{definition}

It is easy to check that, for $\alpha =1,$ the operator defined in (\ref{op}%
) coincides with the first-order derivative, i.e.
\begin{equation}
\mathcal{L}_{x}^{1}u(x)=\frac{d}{dx}u(x).  \label{first}
\end{equation}

The previous definition can be generalized by considering the
convolution-type derivative (see \cite{TOA}), which is defined by means of
the subordinators' theory. Let $g:(0,+\infty )\rightarrow \mathbb{R}$ be a
Bernstein function, i.e. let $g$ be non-negative, infinitely differentiable
and such that, for any $x\in (0,+\infty ),$%
\begin{equation*}
(-1)^{n}\frac{d^{n}}{dx^{n}}g(x)\leq 0,\qquad \text{for any }n\in \mathbb{N}%
_{.}
\end{equation*}%
A function $g$ is a Bernstein function if and only if it admits the
following representation%
\begin{equation*}
g(x)=a+bx+\int_{0}^{+\infty }(1-e^{-sx})\overline{\nu }(ds),
\end{equation*}%
where $\overline{\nu }$ is the L\'{e}vy measure and $\left( a,b,\overline{%
\nu }\right) $ is called the L\'{e}vy triplet of $g$. Then a subordinator is
the stochastic process with non-decreasing paths $\mathcal{A}^{g}:=\left\{
\mathcal{A}^{g}(t),t\geq 0\right\} ,$ such that%
\begin{equation*}
\mathbb{E}e^{-s\mathcal{A}^{g}(t)}=e^{-g(s)t},
\end{equation*}%
(see \cite{APP} and \cite{SAT} for the theory of L\'{e}vy processes and
subordinators). Let moreover $L^{g}(t),$ $t\geq 0,$ be the inverse of $%
\mathcal{A}^{g}$, i.e.
\begin{equation*}
L^{g}(t)=\inf \left\{ s\geq 0:\mathcal{A}^{g}(s)>t\right\} ,\qquad t>0
\end{equation*}%
and $l_{g}(x,t)=\Pr \left\{ L^{g}(t)\in dx\right\} $ be its transition
density.

We recall the definition of the convolution-type derivative on the positive
half-axes, in the sense of Caputo (see \cite{TOA}, Def.2.4, for $b=0$) :%
\begin{equation}
D_{t}^{g}u(t):=\int_{0}^{t}\frac{d}{ds}u(t-s)\nu (s)ds,\qquad t>0,
\label{conv}
\end{equation}%
where $\nu $ is the tail of the L\'{e}vy measure $\overline{\nu },$ i.e. $%
\nu (s)ds=a+\overline{\nu }(s,\infty )ds$. For $g(s)=s^{\alpha }$ with $%
\alpha \in (0,1)$, the process $\mathcal{A}^{g}$ coincides with the $\alpha $%
-stable subordinator with $\mathbb{E}e^{-s\mathcal{A}^{g}(t)}=e^{-s^{\alpha
}t}$ (whose L\'{e}vy measure is $\overline{\nu }(s)=s^{-\alpha -1}/\Gamma
(m-\alpha )$) and formula (\ref{conv}) reduces to the Caputo derivative (\ref%
{ca}), for $m=1.$

The Laplace transform of $D_{t}^{g}$ is given by
\begin{equation}
\int_{0}^{+\infty }e^{-st}D_{t}^{g}u(t)dt=g(s)\widetilde{u}(s)-\frac{g(s)}{s}%
u(0),\qquad \mathcal{R}(s)>s_{0},  \label{lapconv}
\end{equation}%
(see \cite{TOA}, Lemma 2.5).

\

Let $E_{\beta ,\gamma }(x)$ denote the generalized Mittag-Leffler function
defined as%
\begin{equation}
E_{\beta ,\gamma }(x)=\sum_{j=0}^{\infty }\frac{x^{\beta j}}{\Gamma (\beta
j+\gamma )},\qquad x,\beta ,\gamma \in \mathbb{C},\text{ }\func{Re}\left(
\beta \right) ,\func{Re}\left( \gamma \right) >0,  \label{ml}
\end{equation}%
(see \cite{HAU}), then we recall the well-known formula of its Laplace
transform (see \cite{KIL}, formula (1.9.13), for $\rho =1$), i.e.%
\begin{equation}
\mathcal{L}\left\{ x^{\gamma -1}E_{\beta ,\gamma }(Ax^{\beta });s\right\} =%
\frac{s^{\beta -\gamma }}{s^{\beta }-A},  \label{lap}
\end{equation}%
with $\func{Re}\left( \beta \right) ,\func{Re}\left( \gamma \right) >0,$ $%
A\in \mathbb{R}$ and $s>|A|^{1/\func{Re}\left( \beta \right) }.$

We recall also the well-known (power-law) asymptotic behavior of the
Mittag-Leffler function (see \cite{KIL}, formula (1.8.11)), for $%
|z|\rightarrow +\infty $, i.e.%
\begin{equation}
E_{\beta ,\gamma }(z)=-\sum_{k=1}^{n}\frac{z^{-k}}{\Gamma (\gamma -\beta k)}%
+O\left( z^{-n-1}\right) ,\quad n\in \mathbb{N},  \label{asy}
\end{equation}%
which holds for $0<\beta <2,$ $\mu \leq \arg (z)\leq \pi $, where $\pi \beta
/2<\mu <\min \{\pi ,\pi \beta \}.$

We will also make use of the property of the function (\ref{ml}) of being
completely monotone (CM), which is proved in \cite{SCH}, for a negative
argument and $\beta \in (0,1]$, $\gamma \geq \beta .$ By definition, a
function $f(x)$ is CM if and only if it is infinitely differentiable and $%
\left( -1\right) ^{n}\frac{d^{n}}{dx^{n}}f(x)\geq 0$, for any $n=0,1,...$

Finally we recall the definition of Fox's H-functions, which will by applied
in the next sections. Let ${H}_{p,q}^{m,n}$ denote the H-function defined as
(see \cite{MAT} p.13):%
\begin{equation}
{H}_{p,q}^{m,n}\left[ \left. z\right\vert
\begin{array}{ccc}
(a_{1},A_{1}) & ... & (a_{p},A_{p}) \\
(b_{1},B_{1}) & ... & (b_{q},B_{q})%
\end{array}%
\right] =\frac{1}{2\pi i}\int_{\mathcal{L}}\frac{\left\{
\prod\limits_{j=1}^{m}\Gamma (b_{j}+B_{j}s)\right\} \left\{
\prod\limits_{j=1}^{n}\Gamma (1-a_{j}-A_{j}s)\right\} z^{-s}ds}{\left\{
\prod\limits_{j=m+1}^{q}\Gamma (1-b_{j}-B_{j}s)\right\} \left\{
\prod\limits_{j=n+1}^{p}\Gamma (a_{j}+A_{j}s)\right\} },  \label{hh}
\end{equation}%
with $z\neq 0,$ $m,n,p,q\in \mathbb{N}_{0},$ for $1\leq m\leq q$, $0\leq
n\leq p$, $a_{j},b_{j}\in \mathbb{R},$ $A_{j},B_{j}\in \mathbb{R}^{+},$ for $%
i=1,...,p,$ $j=1,...,q$ and $\mathcal{L}$ is a contour such that the
following condition is satisfied%
\begin{equation}
A_{\lambda }(b_{j}+\alpha )\neq B_{j}(a_{\lambda }-k-1),\qquad j=1,...,m,%
\text{ }\lambda =1,...,n,\text{ }\alpha ,k=0,1,...  \label{1.6}
\end{equation}

\section{Auxiliary results}

We consider the fractional diffusion equation obtained by replacing the time
derivative with the pseudo-differential operator $\mathcal{L}_{t}^{\alpha }$
defined in Def.\ref{Def.1} in the standard heat equation (in the Fourier space), i.e.%
\begin{equation*}
\frac{\mathcal{\partial }}{\partial t}\widehat{u}(\xi ,t)=-\frac{\xi ^{2}}{2}%
\widehat{u}(\xi ,t)\qquad \xi \in \mathbb{R},\text{ }t\geq 0,\qquad \text{%
with }\widehat{u}(\xi ,0)=1.
\end{equation*}

\begin{lemma}
\label{L2}
Let $\mathcal{L}_{t}^{\alpha }$ be the operator defined in Def.\ref{Def.1}, then the
solution to the following equation%
\begin{equation}
\mathcal{L}_{t}^{\alpha }\widehat{u}(\xi ,t)=-\frac{\Gamma (\alpha +1)\xi
^{2}}{2}\widehat{u}(\xi ,t)\qquad \xi \in \mathbb{R},\text{ }t\geq 0,\text{ }%
\alpha \in (0,1],  \label{diff}
\end{equation}%
is given by the characteristic function of a standard Brownian motion $%
W\doteq W(t),$ $t\geq 0,$\ time-changed by $\mathcal{T}(t)=t^{\alpha },$ for
any $t\geq 0,$ i.e. $W_{\alpha }(t):=W(t^{\alpha }).$
\end{lemma}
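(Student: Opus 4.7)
The plan is to unwrap the definition of $\mathcal{L}_{t}^{\alpha}$ and reduce (\ref{diff}) to a standard Caputo equation for $\log \widehat{u}$, which can be solved explicitly using a known monomial identity. First, I would look for a solution $\widehat{u}(\xi ,t)$ that is strictly positive in $t$ (justified a posteriori, since a characteristic function of a Gaussian with zero mean is positive). Under this assumption, using Def.\ref{Def.1} the left-hand side of (\ref{diff}) becomes $\widehat{u}(\xi ,t)\,D_{t}^{\alpha}\log \widehat{u}(\xi ,t)$, and I can divide both sides by $\widehat{u}(\xi ,t)$ to obtain the equivalent equation
\begin{equation*}
D_{t}^{\alpha}\,v(\xi ,t)=-\frac{\Gamma (\alpha +1)\xi ^{2}}{2},\qquad v(\xi ,t):=\log \widehat{u}(\xi ,t),
\end{equation*}
with initial datum $v(\xi ,0)=\log \widehat{u}(\xi ,0)=0$.

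Next, I would recall the elementary Caputo identity $D_{t}^{\alpha}t^{\beta}=\Gamma (\beta +1)t^{\beta -\alpha}/\Gamma (\beta -\alpha +1)$ (valid for $\beta \geq \alpha$, with the convention that the Caputo derivative of a constant is zero), which in the special case $\beta =\alpha$ gives $D_{t}^{\alpha}t^{\alpha}=\Gamma (\alpha +1)$. Plugging the ansatz $v(\xi ,t)=-\tfrac{\xi ^{2}}{2}\,t^{\alpha}$ into the reduced equation shows that it satisfies both the fractional ODE and the initial condition $v(\xi ,0)=0$. Uniqueness of the solution of the Caputo Cauchy problem for $\alpha \in (0,1]$ with a constant forcing term (a standard fact, see e.g. \cite{KIL}) then yields $v(\xi ,t)=-\tfrac{\xi ^{2}}{2}\,t^{\alpha}$, hence
\begin{equation*}
\widehat{u}(\xi ,t)=\exp \!\left( -\frac{\xi ^{2}}{2}\,t^{\alpha}\right) ,\qquad \xi \in \mathbb{R},\ t\geq 0.
\end{equation*}

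Finally, I would identify the right-hand side as the characteristic function of a Gaussian random variable with mean zero and variance $t^{\alpha}$, which is exactly the law of $W(t^{\alpha})$ for $W$ a standard Brownian motion; this is the deterministic time-change $\mathcal{T}(t)=t^{\alpha}$ stated in the lemma.

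The main obstacle I anticipate is conceptual rather than computational: one must justify the manipulation $\mathcal{L}_{t}^{\alpha}\widehat{u}=\widehat{u}\,D_{t}^{\alpha}\log \widehat{u}$, since Def.\ref{Def.1} requires $\widehat{u}(\xi ,\cdot )$ to be positive on the interval where the equation is solved, and $\log \widehat{u}$ must satisfy the regularity hypotheses on which the Caputo derivative is defined. Both conditions are easily verified for the candidate solution $\exp (-\tfrac{\xi ^{2}}{2}t^{\alpha})$, so the argument closes by exhibiting this solution and invoking uniqueness.
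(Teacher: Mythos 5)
Your proposal is correct and follows essentially the same route as the paper: both unwrap Def.\ref{Def.1} to reduce (\ref{diff}) to the Caputo problem $D_{t}^{\alpha}\log \widehat{u}(\xi ,t)=-\Gamma (\alpha +1)\xi ^{2}/2$ with $\log \widehat{u}(\xi ,0)=0$, and both arrive at $\widehat{u}(\xi ,t)=e^{-\xi ^{2}t^{\alpha}/2}$. The only difference is in how the reduced constant-forcing equation is solved — the paper applies the Laplace transform, while you verify the ansatz $-\tfrac{\xi ^{2}}{2}t^{\alpha}$ directly via $D_{t}^{\alpha}t^{\alpha}=\Gamma (\alpha +1)$ and invoke uniqueness — which is an equivalent, equally standard computation.
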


\begin{proof}
By considering (\ref{op}), we get from (\ref{diff}) that%
\begin{equation*}
D_{t}^{\alpha }\log \widehat{u}(\xi ,t)=-\frac{\Gamma (\alpha +1)\xi ^{2}}{2}%
,
\end{equation*}%
with $\log \widehat{u}(\xi ,0)=0.$ Then we take the Laplace transform and
obtain, by taking into account formula (2.4.62) in \cite{KIL} (with the
initial condition),%
\begin{equation}
s^{\alpha }\int_{0}^{+\infty }e^{-st}\log \widehat{u}(\xi ,t)dt-s^{\alpha
-1}\log \widehat{u}(\xi ,0)=-\frac{\Gamma (\alpha +1)\xi ^{2}}{2s}
\end{equation}%
and then%
\begin{equation*}
\log \widehat{u}(\xi ,t)=-\frac{\xi ^{2}t^{\alpha }}{2},
\end{equation*}%
which gives
\begin{equation*}
\widehat{u}(\xi ,t)=e^{-\xi ^{2}t^{\alpha }/2},\qquad \xi \in \mathbb{R},%
\text{ }t\geq 0.
\end{equation*}
\end{proof}

We remark that the introduction of the operator $\mathcal{L}_{t}^{\alpha }$
entails a deterministic time change in the corresponding process, contrary
to what happens when substituting the time-derivative with the Caputo (or
the Riemann-Liouville) fractional derivative in the heat equation. In the
latter case, a random time-change is produced, by means of an independent
inverse stable subordinator (see, e.g., \cite{MEE}, p.42).

In this case, since the function $\mathcal{T}_{\alpha }(\cdot ):\mathbb{R}%
^{+}\rightarrow \mathbb{R}^{+}$ is continuous, increasing and such that $%
\mathcal{T}_{\alpha }(0)=0$, as a consequence, $W_{\alpha }(t)$ is an
additive process (see \cite{CON}, p. 469 and \cite{SAT}, p.3), i.e. the
Brownian motion (which is a L\'{e}vy process) looses the stationarity of its
increments, by effect of the time-change, but the latter are still
independent. Indeed, we get, for $0\leq t_{1}\leq t_{2},$%
\begin{equation*}
\mathbb{E}\left( W_{\alpha }(t_{2})-W_{\alpha }(t_{1})\right)
^{2}=t_{2}^{\alpha }-t_{1}^{\alpha }\neq \left( t_{2}-t_{1}\right) ^{\alpha
}=\mathbb{E}\left( W_{\alpha }(t_{2}-t_{1})\right) ^{2},
\end{equation*}%
unless $\alpha =1.$ Finally, we notice that $W_{\alpha }$ is a self-similar
process with Hurst exponent equal to $\alpha /2,$ since, for any $c>0,$
\begin{equation*}
c^{\alpha /2}W_{\alpha }(t)\overset{d}{=}W_{\alpha }(ct),\qquad \text{for
any }t\geq 0.
\end{equation*}%
The process $W_{\alpha }$ is already known in the literature as "scaled
Brownian motion" (see \cite{LIM}, \cite{JEO}), and is used to model
anomalous diffusion (i.e. diffusion with mean-squared displacement $\langle
W_{\alpha }(t)^{2}\rangle \sim t^{\alpha }$, for $\alpha \in (0,2)$). As we
will see in the next section, the introduction of the operator $\mathcal{L}%
_{t}^{\alpha }$ in the Fokker-Planck equation (in Fourier space) will entail
a deterministic time-change in the corresponding OU process, which, instead
of $\mathcal{T}_{\alpha }(t)=t^{\alpha },$ is expressed by means of the
Mittag-Leffler process (see formula (\ref{tt}) below).

\

As a motivation for our analysis, we introduce the following Poisson
shot-noise process and study its asymptotic behavior.

\begin{definition}
\label{Def.3}
\textbf{(Power-law shot-noise) }Let $\left\{ N(t),t\geq 0\right\} $ be an
homogeneous Poisson process, with rate $\lambda >0,$ and let $T_{j}$ be the
occurring time of its $j$-th event, for $j=1,2,...,$ then we define the
generalized shot-noise process as%
\begin{equation}
S_{\mathcal{\alpha }}(t):=\sum_{j=1}^{N(t)}h(t-T_{j}),  \label{sh}
\end{equation}%
with $h(u)=\sqrt{u^{\mathcal{\alpha }-1}E_{\alpha ,\mathcal{\alpha }%
}(-\gamma (2u)^{\mathcal{\alpha }})}1_{\left\{ u>0\right\} }$, $\alpha \in
(0,1].$
\end{definition}

\begin{remark}
By considering formula (16.85) in \cite{PAP}, p.632, for $h(\cdot )$ given
above, it can be easily checked that the.joint cumulant generating function
of $S_{\mathcal{\alpha }}$ is equal to%
\begin{equation}
C_{\mathcal{\alpha }}(\underline{\theta },\underline{t}):=\log \mathbb{E}%
e^{\theta _{1}S_{\mathcal{\alpha }}(t_{1})+...+\theta _{k}S_{\mathcal{\alpha
}}(t_{k})}=\lambda \int_{-\infty }^{+\infty }\left[ e^{\theta
_{1}h(t_{1}-u)+...+\theta _{k}h(t_{k}-u)}-1\right] du,  \label{jgf}
\end{equation}%
where $\underline{\theta }:=(\theta _{1},...,\theta _{k})$ and $\underline{t}%
:=(t_{1},..,t_{k}),$ for any $k\in \mathbb{N}.$ Moreover, by applying
formula (\ref{asy}), we can see that $h(u)\simeq ku^{-\frac{\mathcal{\alpha }%
}{2}-\frac{1}{2}},$ for $u\rightarrow \infty $ and for a constant $k>0.$ The
function $h$ is decreasing, since both terms under square root in its
expression are CM, for $\mathcal{\alpha }\in (0,1]$, and thus the same is
true for their product; the square root does not affect the decreasing
behavior. Therefore the previous model is well suited to the case where the
values of the shocks (occurring at Poisson times) decrease in time with a
power law (thus slower than in the, usual, exponential case).
\end{remark}

We now study the limiting behavior of $S_{\mathcal{\alpha }}:=\{S_{\mathcal{%
\alpha }}(t),t\geq 0\}$, after centering (\ref{sh}) (subtracting its mean $%
\mu _{n}$) and by a proper rescaling of the amplitude of the shocks and
their frequency.

\begin{lemma}
Let $\left\{ N_{n}(t),t\geq 0\right\} $ be an homogeneous Poisson process,
with rate $\lambda =n\lambda _{0},$ for $\lambda _{0}>0,$ and let $h(u)=%
\frac{1}{\sqrt{n}}h_{0}(u),$ with $h_{0}(u)=\sqrt{(u+\xi _{0})^{\mathcal{%
\alpha }-1}E_{\alpha ,\mathcal{\alpha }}(-\gamma 2^{\alpha }(u+\xi
_{0})^{\alpha })}1_{\left\{ u>0\right\} },$ for $n\in \mathbb{N}$, some $\xi
_{0}>0.$ Under these assumptions, the process%
\begin{equation}
U_{\mathcal{\alpha },n}(t):=\frac{1}{\sqrt{n}}\left[
\sum_{j=1}^{N_{n}(t)}h_{0}(t-T_{j})-\mu _{n}(t)\right] ,  \label{uu}
\end{equation}%
converges weakly, for $n\rightarrow +\infty $, to a Gaussian process $U_{%
\mathcal{\alpha }}:=\{U_{\mathcal{\alpha }}(t),t\geq 0\}$ with joint
cumulant generating function%
\begin{equation}
\log \mathbb{E}e^{\theta _{1}U_{\mathcal{\alpha }}(t_{1})+...+\theta _{k}U_{%
\mathcal{\alpha }}(t_{k})}=\frac{\lambda _{0}}{2}\int_{-\infty }^{+\infty }%
\left[ \theta _{1}h_{0}(t_{1}-u)+...+\theta _{k}h_{0}(t_{k}-u)\right] ^{2}du.
\label{jgf2}
\end{equation}
\end{lemma}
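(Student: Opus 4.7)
The plan is to prove convergence of the joint cumulant generating functions (CGF), which, combined with Lévy's continuity theorem, gives convergence of all finite-dimensional distributions to those of a Gaussian process; the form of the limiting CGF then identifies the limit.

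First I would apply formula (\ref{jgf}) from the Remark with intensity $n\lambda_0$ and response $h_0$. Writing $V_{n}(t):=\sum_{j=1}^{N_{n}(t)}h_{0}(t-T_{j})$, this yields
\begin{equation*}
\log \mathbb{E}\,e^{\sum_j \theta_j V_n(t_j)} \;=\; n\lambda_0 \int_{-\infty}^{+\infty}\bigl[e^{\sum_j \theta_j h_0(t_j-u)}-1\bigr]\,du .
\end{equation*}
Because $U_{\alpha,n}(t)=n^{-1/2}[V_n(t)-\mu_n(t)]$ with $\mu_n(t)=\mathbb{E} V_n(t)=n\lambda_0\int h_0(t-u)\,du$, replacing $\theta_j$ by $\theta_j/\sqrt{n}$ in the displayed CGF and subtracting the linear mean contribution produces
\begin{equation*}
\log \mathbb{E}\,e^{\sum_j \theta_j U_{\alpha,n}(t_j)} \;=\; n\lambda_0 \int_{-\infty}^{+\infty}\!\Bigl[e^{\Phi_n(u)}-1-\Phi_n(u)\Bigr]\,du,
\end{equation*}
where $\Phi_n(u):=\tfrac{1}{\sqrt{n}}\sum_j \theta_j h_0(t_j-u)$.

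Second, I would Taylor expand the integrand. Using $|e^{x}-1-x|\le \tfrac{x^{2}}{2}\,e^{|x|}$, the $n\lambda_0$-scaled integrand equals $\tfrac{\lambda_0}{2}\bigl(\sum_j \theta_j h_0(t_j-u)\bigr)^{2}+R_n(u)$ with a remainder satisfying $|R_n(u)|\le C\,n^{-1/2}\bigl|\sum_j \theta_j h_0(t_j-u)\bigr|^{3}e^{|\Phi_n(u)|}$. Thanks to the shift $\xi_0>0$, the integrand $h_0$ has no singularity at $0$ and is bounded on $\mathbb{R}$, so $|\Phi_n(u)|\le C'/\sqrt{n}$ uniformly and $e^{|\Phi_n(u)|}$ is uniformly bounded. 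To pass to the limit in the dominant quadratic term one needs $h_0^{2}$ to be integrable: from the Mittag-Leffler asymptotic (\ref{asy}), $h_0(u)^{2}\sim c\,(u+\xi_0)^{-\alpha-1}$ as $u\to+\infty$, which is integrable on $(0,+\infty)$ for $\alpha\in(0,1]$. Dominated convergence (applied to both the quadratic term and the cubic remainder) then gives
\begin{equation*}
\lim_{n\to\infty}\log \mathbb{E}\,e^{\sum_j \theta_j U_{\alpha,n}(t_j)} \;=\; \frac{\lambda_0}{2}\int_{-\infty}^{+\infty}\!\Bigl(\sum_j \theta_j h_0(t_j-u)\Bigr)^{2}du,
\end{equation*}
which is exactly (\ref{jgf2}).

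Third, since the limiting CGF is a positive-definite quadratic form in $(\theta_1,\dots,\theta_k)$, it is the CGF of a centred Gaussian vector, and hence defines a unique Gaussian process $U_\alpha$. Lévy's continuity theorem, applied coordinate-wise via characteristic functions (set $\theta_j=i\xi_j$), converts this to convergence of finite-dimensional distributions; if full weak convergence in $C[0,T]$ is desired one adds a standard tightness check (e.g.\ Kolmogorov's criterion using the fourth moment of increments, which for centred Poisson integrals reduces again to $L^p$ bounds on $h_0$).

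The main technical obstacle I foresee is the uniform-in-$n$ domination required to justify dominated convergence on the whole real line: the cubic remainder $n^{-1/2}|h_0(t_j-u)|^{3}$ is integrable precisely because $\xi_0>0$ removes the boundary singularity and $(u+\xi_0)^{-3(\alpha+1)/2}$ decays fast enough at infinity, so the role of the shift $\xi_0$ in Definition \ref{Def.3} (as adapted here) is exactly to make the exchange of limit and integral legitimate. Beyond this, the computation is routine.
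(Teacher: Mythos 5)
Your argument for the convergence of the finite-dimensional distributions is correct and is essentially the paper's route (the paper delegates this step to the Taylor expansion of the cumulant generating function following Papoulis, which is exactly what you carry out in detail). The gap is in the tightness step, which the lemma's claim of \emph{weak} convergence makes unavoidable and which is where the paper spends almost all of its effort. You dismiss it as ``a standard tightness check (e.g.\ Kolmogorov's criterion using the fourth moment of increments)'', but that criterion does not apply directly here: for fixed $n$ the fourth central moment of an increment of a compensated Poisson shot-noise contains the fourth-cumulant term $\kappa_4=\lambda\int \overline{h}_0^{\,4}$ coming from the newly arrived shocks on $(s,t]$, which is of order $\lambda_0|t-s|/n$, i.e.\ only \emph{linear} in $|t-s|$, so the Kolmogorov--Chentsov bound $C|t-s|^{1+\eta}$ fails. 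The paper instead verifies a Billingsley-type condition $\delta^{-1}P(\sup_{|t-s|<\delta}|U_{\alpha,n}(t)-U_{\alpha,n}(s)|\geq\eta)\leq\varepsilon$ via Doob's submartingale inequality, after decomposing the increment into an independent shot-noise $U^{\ast}_{\alpha,n}(t-s)$ built from the new Poisson arrivals and a term $W_{\alpha,n}(s;t)$ recording the decay of the old shocks, and then controls the latter through explicit bounds on $|\overline{h}_0(x)-\overline{h}_0(x+\rho)|\leq k\rho(x+\xi_0)^{-\beta}$ with $\beta=\alpha/2+3/2$, obtained from the derivative of the Mittag-Leffler function and its asymptotics.

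A related, smaller point: you attribute the role of $\xi_0>0$ to the dominated-convergence step in the finite-dimensional argument, whereas the paper states explicitly that the finite-dimensional convergence would go through for any $\xi_0\geq 0$ (indeed $h_0^2\sim u^{\alpha-1}/\Gamma(\alpha)$ is integrable at the origin even when $\xi_0=0$); the shift is genuinely needed only in the tightness estimates above, where one must bound $\sup_{z}\frac{d}{dz}\overline{h}_0(z)$ uniformly near the origin. So the part of the proof you label ``routine'' is fine, but the part you foresee as the ``main technical obstacle'' is not the actual obstacle, and the method you propose for the step you skip would need to be replaced by something along the lines of the paper's maximal-inequality argument.
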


\begin{proof}
The convergence of the finite dimensional distributions can be easily
derived, following \cite{PAP}, p.633, by the Taylor's series expansion of (%
\ref{jgf}). See also \cite{PAP2} and \cite{HEI}. Then we prove tightness of
the sequence, by proving the following sufficient conditions (see \cite{REV}%
, p.516):

\begin{equation*}
P(|U_{\mathcal{\alpha },n}(0)|>A)\leq \varepsilon
\end{equation*}%
for any $n\geq n_{0}$, $\varepsilon >0$ and for some $n_{0},A$,%
\begin{equation}
\delta ^{-1}P(\sup_{|t-s|<\delta }|U_{\mathcal{\alpha },n}(t)-U_{\mathcal{%
\alpha },n}(s)|\geq \eta )\leq \varepsilon  \label{con}
\end{equation}%
for $s<t$ and for some $\delta \in (0,1).$ The first condition is trivially
satisfied since the process starts a.s. from zero. The check that condition (%
\ref{con}) holds is carried out by applying the Doob submartingale's
inequality (see \cite{REV}, p.52), i.e.%
\begin{equation*}
\eta ^{p}P(\sup_{t}X_{t}\geq \eta )\leq \sup_{t}\mathbb{E}X_{t}^{p},
\end{equation*}%
for $p\geq 1$ and for a positive submartingale $X_{t}.$ We choose $p=4$ and
show that the difference process in (\ref{con}) is a positive submartingale,
since its fourth moment is increasing in $t$, for fixed $t-s$. Indeed, we
can write (by denoting $\overline{h}_{0}(\cdot )$ the centered addends)%
\begin{eqnarray*}
U_{\mathcal{\alpha },n}(t)-U_{\mathcal{\alpha },n}(s) &=&\frac{1}{\sqrt{n}}%
\left[ \sum_{j=1}^{N_{n}(t)}h_{0}(t-T_{j})-\mu
_{n}(t)-\sum_{j=1}^{N_{n}(s)}h_{0}(s-T_{j})+\mu _{n}(s)\right] \\
&=&\frac{1}{\sqrt{n}}\left[ \sum_{j=1}^{N_{n}(s)}\overline{h}%
_{0}(t-T_{j})+\sum_{j=N_{n}(s)+1}^{N_{n}(t)}\overline{h}_{0}(t-T_{j})-%
\sum_{j=1}^{N_{n}(s)}\overline{h}_{0}(s-T_{j})\right] \\
&=&\frac{1}{\sqrt{n}}\left[ \sum_{j=N_{n}(s)+1}^{N_{n}(t)}\overline{h}%
_{0}(t-T_{j})-\sum_{j=1}^{N_{n}(s)}\left[ \overline{h}_{0}(s-T_{j})-%
\overline{h}_{0}(t-T_{j})\right] \right] \\
&=&U_{\alpha ,n}^{\ast }(t-s)-W_{\alpha ,n}(s;t)
\end{eqnarray*}%
where $U_{\alpha ,n}^{\ast }$ is a new, centered, shot-noise (starting from $%
s$ and evaluated in $t$), which depends only on the Poisson events in $%
\left( 0,t-s\right) $ and is independent of
\begin{equation*}
W_{\alpha ,n}(s;t):=\sum_{j=1}^{N_{n}(s)}\left[ \overline{h}_{0}(s-T_{j})-%
\overline{h}_{0}(t-T_{j})\right] /\sqrt{n},
\end{equation*}%
thanks to the lack of memory property of the Poisson process. Thus we get,
by putting $\rho :=t-s,$%
\begin{eqnarray}
&&\mathbb{E}\left[ U_{\mathcal{\alpha },n}(t)-U_{\mathcal{\alpha },n}(s)%
\right] ^{4}  \label{uw} \\
&=&\mathbb{E}\left[ U_{\alpha ,n}^{\ast }(\rho )\right] ^{4}+\mathbb{E}\left[
W_{\alpha ,n}(s;\rho )\right] ^{4}-4\mathbb{E}\left[ U_{\alpha ,n}^{\ast
}(\rho )\right] \mathbb{E}\left[ W_{\alpha ,n}(s;\rho )\right] ^{3}+  \notag
\\
&&-4\mathbb{E}\left[ U_{\alpha ,n}^{\ast }(\rho )\right] ^{3}\mathbb{E}\left[
W_{\alpha ,n}(s;\rho )\right] +6\mathbb{E}\left[ U_{\alpha ,n}^{\ast }(\rho )%
\right] ^{2}\mathbb{E}\left[ W_{\alpha ,n}(s;\rho )\right] ^{2}  \notag \\
&=&\mathbb{E}\left[ U_{\alpha ,n}^{\ast }(\rho )\right] ^{4}+\mathbb{E}\left[
W_{\alpha ,n}(s;\rho )\right] ^{4}+6\mathbb{E}\left[ U_{\alpha ,n}^{\ast
}(\rho )\right] ^{2}\mathbb{E}\left[ W_{\alpha ,n}(s;\rho )\right] ^{2}
\notag
\end{eqnarray}%
since, by the centering, the first moment of both $U_{\alpha ,n}^{\ast }$ \
and $W_{\alpha ,n}(s;\rho )$ is null. We can now evaluate the moments of $%
W_{\alpha ,n}(s;\rho ),$ by its cumulant generating function:%
\begin{eqnarray}
K(\theta ) &:&=\log \{\mathbb{E}e^{\theta W_{\alpha ,n}(s;\rho )}\}=\log
\left\{ e^{-\lambda s}\sum_{m=0}^{\infty }\frac{(\lambda s)^{m}}{m!}\mathbb{E%
}\left[ \left. e^{\theta W_{\alpha ,n}(s;\rho )}\right\vert N(s)=m\right]
\right\}  \label{cum} \\
&=&\log \left\{ e^{-\lambda s}\sum_{m=0}^{\infty }\frac{(\lambda s)^{m}}{m!}%
\left[ \frac{1}{s}\int_{0}^{s}e^{\frac{\theta }{\sqrt{n}}\left[ \overline{h}%
_{0}(s-y)-\overline{h}_{0}(s+\rho -y)\right] }dy\right] ^{m}\right\}  \notag
\\
&=&\lambda \int_{0}^{s}\left[ e^{\frac{\theta }{\sqrt{n}}\left[ \overline{h}%
_{0}(s-y)-\overline{h}_{0}(s+\rho -y)\right] }-1\right] dy,  \notag
\end{eqnarray}%
where, in the second line, we have considered the uniform conditional
distribution of the occurring time for each Poisson event and their
independence. Then, by denoting as $\kappa _{j}$ the $j$-th cumulant of $%
W_{\alpha ,n}(s;\rho )$ and differentiating (\ref{cum}), we get
\begin{equation}
\mathbb{E}\left[ W_{\alpha ,n}(s;\rho )\right] ^{2}=\kappa _{2}-\kappa
_{1}^{2}=\left. \frac{d^{2}}{d\theta ^{2}}K(\theta )\right\vert _{\theta =0}=%
\frac{\lambda }{\sqrt{n}}\int_{0}^{s}\left[ \overline{h}_{0}(s-y)-\overline{h%
}_{0}(s+\rho -y)\right] ^{2}dy  \label{uw2}
\end{equation}%
and%
\begin{eqnarray}
&&\mathbb{E}\left[ W_{\alpha ,n}(s;\rho )\right] ^{4}  \label{uw3} \\
&=&\kappa _{4}+4\kappa _{3}\kappa _{1}+3\kappa _{2}^{2}+6\kappa _{2}\kappa
_{1}^{2}+\kappa _{1}^{4}=\left. \frac{d^{4}}{d\theta ^{4}}K(\theta
)\right\vert _{\theta =0}+3\left[ \left. \frac{d^{2}}{d\theta ^{2}}K(\theta
)\right\vert _{\theta =0}\right] ^{2}  \notag \\
&=&\frac{\lambda }{n^{2}}\int_{0}^{s}\left[ \overline{h}_{0}(s-y)-\overline{h%
}_{0}(s+\rho -y)\right] ^{4}dy+\frac{3\lambda ^{2}}{n}\left\{ \int_{0}^{s}%
\left[ \overline{h}_{0}(s-y)-\overline{h}_{0}(s+\rho -y)\right]
^{2}dy\right\} ^{2},  \notag
\end{eqnarray}%
(by considering that the first cumulant vanishes). We now take the sup of (%
\ref{uw}), for $\rho \leq \delta $:%
\begin{eqnarray*}
&&\sup_{\rho \leq \delta }\mathbb{E}\left[ U_{\mathcal{\alpha },n}(s+\rho
)-U_{\mathcal{\alpha },n}(s)\right] ^{4} \\
&\leq &\sup_{\rho \leq \delta }\mathbb{E}\left[ U_{\alpha ,n}^{\ast }(\rho )%
\right] ^{4}+\sup_{\rho \leq \delta }\mathbb{E}\left[ W_{\alpha ,n}(s;\rho )%
\right] ^{4}+6\sup_{\rho \leq \delta }\mathbb{E}\left[ U_{\alpha ,n}^{\ast
}(\rho )\right] ^{2}\sup_{\rho \leq \delta }\mathbb{E}\left[ W_{\alpha
,n}(s;\rho )\right] ^{2}.
\end{eqnarray*}%
All the terms are positive and increasing in $\rho $, for fixed $s$, as can
be checked in (\ref{uw2}) and (\ref{uw3}), by considering also that%
\begin{equation*}
U_{\alpha ,n}^{\ast }(\rho )=\frac{1}{\sqrt{n}}\sum_{j=N_{n}(s)+1}^{N_{n}(s+%
\rho )}\overline{h}_{0}(s+\rho -T_{j})\leq \frac{1}{\sqrt{n}}%
\sum_{j=N_{n}(s)+1}^{N_{n}(s+\rho )}\overline{h}_{0}(s-T_{j})
\end{equation*}%
and that $\overline{h}_{0}(\cdot )$ is positive. Thus we get that%
\begin{equation}
\sup_{\rho \leq \delta }\mathbb{E}\left[ U_{\mathcal{\alpha },n}(s+\rho )-U_{%
\mathcal{\alpha },n}(s)\right] ^{4}\leq \mathbb{E}\left[ U_{\alpha ,n}^{\ast
}(\delta )\right] ^{4}+\mathbb{E}\left[ W_{\alpha ,n}(s;\delta )\right]
^{4}+6\mathbb{E}\left[ U_{\alpha ,n}^{\ast }(\delta )\right] ^{2}\mathbb{E}%
\left[ W_{\alpha ,n}(s;\delta )\right] ^{2}.  \label{of}
\end{equation}%
In order to study the limit for $s\rightarrow \infty $, of (\ref{of}), we
note that, for $x,\rho >0,$%
\begin{equation}
\left\vert \overline{h}_{0}(x)-\overline{h}_{0}(x+\rho )\right\vert \leq
\rho \sup_{x<z<x+\rho }\frac{d}{dz}\overline{h}_{0}(z)\leq \frac{k\rho }{%
(x+\xi _{0})^{\beta }},  \label{beta}
\end{equation}%
for some $k>0$ and $\beta >\frac{3}{2}.$ Indeed, by ignoring the expected
value, we have that
\begin{eqnarray*}
\frac{d}{dx}\overline{h}_{0}(x) &=&\frac{d}{dx}\left[ \sqrt{(x+\xi _{0})^{%
\mathcal{\alpha }-1}E_{\alpha ,\mathcal{\alpha }}(-\gamma (2x+\xi _{0})^{%
\mathcal{\alpha }})}\right] \\
&=&\frac{\sum_{j=0}^{\infty }\frac{(-\gamma 2^{\mathcal{\alpha }})^{j}(x+\xi
_{0})^{\alpha j+\alpha -2}}{\Gamma \left( \alpha j+\alpha -1\right) }}{2%
\sqrt{(x+\xi _{0})^{\mathcal{\alpha }-1}E_{\alpha ,\mathcal{\alpha }%
}(-\gamma 2^{\alpha }(x+\xi _{0})^{\mathcal{\alpha }})}}=\frac{x^{\frac{%
\alpha }{2}-\frac{3}{2}}E_{\alpha ,\mathcal{\alpha -}1}(-\gamma 2^{\alpha
}(x+\xi _{0})^{\mathcal{\alpha }})}{2\sqrt{E_{\alpha ,\mathcal{\alpha }%
}(-\gamma 2^{\alpha }(x+\xi _{0})^{\mathcal{\alpha }})}}.
\end{eqnarray*}%
Whence, using (\ref{asy})$,$
\begin{eqnarray}
\sup_{x<z<x+\rho }\frac{d}{dz}\overline{h}_{0}(z) &\leq &\sup_{z\geq x}\frac{%
d}{dz}\overline{h}_{0}(z)  \label{sup} \\
&\leq &\sup_{z\geq x}\frac{\left[ K_{1}(z+\xi _{0})^{-2\alpha
}+O(z^{-3\alpha })\right] /\Gamma (-\alpha -1)}{(z+\xi _{0})^{\frac{3}{2}-%
\frac{\alpha }{2}}\left[ K_{2}(z+\xi _{0})^{-2\alpha }+O(z^{-3\alpha })%
\right] ^{1/2}/\Gamma (-\alpha )}  \notag \\
&\leq &\frac{K_{3}}{(z+\xi _{0})^{\beta }}  \notag
\end{eqnarray}%
(for some positive constants $K_{1},$ $K_{2}$ $K_{3},$ so that (\ref{beta})
is satisfied with $\beta =\frac{\alpha }{2}+\frac{3}{2}.$ As a consequence,%
\begin{eqnarray*}
&&\int_{0}^{s}\left[ \overline{h}_{0}(s-y)-\overline{h}_{0}(s+\rho -y)\right]
^{2}dy \\
&=&\int_{0}^{s-c}\left[ \overline{h}_{0}(s-y)-\overline{h}_{0}(s+\rho -y)%
\right] ^{2}dy+\int_{s-c}^{s}\left[ \overline{h}_{0}(s-y)-\overline{h}%
_{0}(s+\rho -y)\right] ^{2}dy.
\end{eqnarray*}%
The second integral can be bounded by considering that, for any $c>0$ and $%
u\in \lbrack 0,c]$,%
\begin{eqnarray*}
|\overline{h}_{0}(u)-\overline{h}_{0}(u+\rho )| &\leq &\rho \sup_{0\leq
z\leq c}\frac{d}{dz}\overline{h}_{0}(z) \\
&=&[\text{analogously to (\ref{sup})}] \\
&\leq &\frac{K_{3}\rho }{\xi _{0}^{\beta }}.
\end{eqnarray*}
\end{proof}

The first moments of the limiting process $U_{\mathcal{\alpha }}$ can be
evaluated by differentiating (\ref{jgf2}) and reads: $\mathbb{E}U_{\mathcal{%
\alpha }}(t)=0$ and
\begin{eqnarray}
Var(U_{\mathcal{\alpha }}(t)) &=&\lambda _{0}\int_{-\infty }^{+\infty
}h_{0}^{2}(t-u)du=\lambda _{0}\int_{0}^{t}(u+\xi _{0})^{\mathcal{\alpha }%
-1}E_{\alpha ,\mathcal{\alpha }}(-\gamma 2^{\mathcal{\alpha }}(u+\xi _{0})^{%
\mathcal{\alpha }})du  \label{var} \\
&=&-\lambda _{0}\left. E_{\mathcal{\alpha },1}\left( -\gamma 2^{\mathcal{%
\alpha }}(u+\xi _{0})^{\mathcal{\alpha }}\right) \right\vert
_{u=0}^{t}=\lambda _{0}\left[ E_{\mathcal{\alpha },1}(-\gamma 2^{\mathcal{%
\alpha }}\xi _{0}^{\alpha })-E_{\mathcal{\alpha },1}(-\gamma 2^{\mathcal{%
\alpha }}(t+\xi _{0})^{\mathcal{\alpha }})\right] ,  \notag
\end{eqnarray}%
by considering that%
\begin{equation*}
\frac{d}{dw}E_{\mathcal{\alpha },1}(-kw^{\mathcal{\alpha }})=-kw^{\mathcal{%
\alpha }-1}E_{\mathcal{\alpha },\mathcal{\alpha }}(-kw^{\mathcal{\alpha }%
}),\qquad k,w\in \mathbb{R}\text{.}
\end{equation*}%
The previous Lemma shows that the distribution of a suitably normalized
sequence of Poisson shot-noise (with power-law impulse response function
described by $h_{0}$) converges, when the frequency of shocks is large and
their amplitude small, to a generalized version of the OU process.\ Indeed,
for $\xi _{0}=0,$ one obtains the following one-dimensional distribution
\begin{eqnarray}
f_{U_{\mathcal{\alpha }}(t)}(x) &:&=P\{U_{\mathcal{\alpha }}(t)\in dx\}/dx
\label{uua} \\
&=&\frac{1}{\sqrt{2\pi (\gamma /\theta )\left[ 1-E_{\alpha ,1}(-\gamma
(2t)^{\alpha })\right] }}\exp \left\{ -\frac{x^{2}}{2(\gamma /\theta )\left[
1-E_{\alpha ,1}(-\gamma (2t)^{\alpha })\right] }\right\} ,  \notag
\end{eqnarray}%
for $x\in \mathbb{R},t>0,\gamma ,\theta >0.$ Note that, as far as the
convergence of the finite-dimensional distributions is concerned, $\xi _{0}$
could be allowed to take any fixed non-negative value $\xi _{0}\in \mathbb{R}%
_{0}^{+}$; however the proof of tightness requires the slightly stronger
condition $\xi _{0}>0.$

\section{Main results}

\subsection{Time-changed Ornstein-Uhlenbeck process}

We start by evaluating the solution of a fractional version of the
Fokker-Planck equation (in the Fourier space), defined by means of the
operator introduced above.

\begin{theorem}
\label{Thm6}
Let $\mathcal{L}_{t}^{\alpha }$ be the operator defined in Def.\ref{Def.1}, then the
solution of the following equation%
\begin{equation}
\mathcal{L}_{t}^{\alpha }\widehat{u}(\xi ,t)=-\frac{\gamma }{2^{1-\alpha }}%
\xi \frac{\partial }{\partial \xi }\widehat{u}(\xi ,t)-\frac{\theta }{%
2^{1-\alpha }}\xi ^{2}\widehat{u}(\xi ,t),\qquad \xi \in \mathbb{R},\text{ }%
t\geq 0,\text{ }\alpha \in (0,1],\text{ }D,\gamma >0,  \label{ch}
\end{equation}%
with initial condition $\widehat{u}(\xi ,0)=1,$ is given by
\begin{equation}
\widehat{u}(\xi ,t)=\exp \left\{ -\frac{\mathcal{\theta }\xi ^{2}}{2\gamma }%
\left[ 1-E_{\alpha ,1}(-\gamma (2t)^{\alpha })\right] \right\} .  \label{ch2}
\end{equation}
\end{theorem}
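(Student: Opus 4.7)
The plan is to linearize the apparently nonlinear operator $\mathcal{L}_t^\alpha$ via the logarithmic substitution used in Lemma \ref{L2}, reduce (\ref{ch}) to a scalar Caputo ODE through a Gaussian ansatz in $\xi$, and invert the resulting Laplace transform by means of (\ref{lap}).

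First I would set $v(\xi,t):=\log \widehat{u}(\xi,t)$ and divide (\ref{ch}) by $\widehat{u}(\xi,t)$. Using $\mathcal{L}_t^\alpha \widehat{u} = \widehat{u}\,D_t^\alpha \log \widehat{u}$ from Definition \ref{Def.1} together with the identity $\xi\,\partial_\xi \widehat{u}/\widehat{u} = \xi\,\partial_\xi v$, the equation becomes the \emph{linear} fractional PDE
\begin{equation*}
D_t^\alpha v(\xi,t) = -\frac{\gamma}{2^{1-\alpha}}\,\xi\,\partial_\xi v(\xi,t) - \frac{\theta}{2^{1-\alpha}}\,\xi^2, \qquad v(\xi,0)=0.
\end{equation*}

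Since both the Euler operator $\xi\partial_\xi$ and the source term $\xi^2$ are homogeneous of degree two in $\xi$, and the initial datum vanishes identically, I would try the separated ansatz $v(\xi,t) = -\frac{\theta \xi^2}{2\gamma}\,\phi(t)$ with $\phi(0)=0$. Substituting and cancelling the common factor $-\theta\xi^2/(2\gamma)$ reduces the problem to the scalar fractional Cauchy problem
\begin{equation*}
D_t^\alpha \phi(t) + 2^\alpha\gamma\,\phi(t) = 2^\alpha\gamma, \qquad \phi(0) = 0.
\end{equation*}
Taking the Laplace transform exactly as in Lemma \ref{L2} and solving for $\widetilde{\phi}(s)$ gives
\begin{equation*}
\widetilde{\phi}(s) = \frac{2^\alpha\gamma}{s\,(s^\alpha+2^\alpha\gamma)} = \frac{1}{s} - \frac{s^{\alpha-1}}{s^\alpha+2^\alpha\gamma},
\end{equation*}
and formula (\ref{lap}) with $\beta=\alpha$, $\gamma=1$, $A=-2^\alpha\gamma$ then yields $\phi(t) = 1 - E_{\alpha,1}(-\gamma(2t)^\alpha)$. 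Exponentiating $v$ produces (\ref{ch2}).

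The genuine obstacle is structural rather than computational: one must notice that the nonlinear operator $\mathcal{L}_t^\alpha$ collapses, under the logarithmic change of variable, to the familiar Caputo setting, and then that the linearized PDE admits a quadratic-in-$\xi$ profile so as to decouple into a scalar ODE. A minor technical point to verify is that $\widehat{u}>0$ throughout, so that the initial division by $\widehat{u}$ is legitimate; this follows a posteriori from the fact that $E_{\alpha,1}(-\gamma(2t)^\alpha) \in (0,1]$ for $\alpha \in (0,1]$, which is a consequence of the complete monotonicity property recalled in Section 1.
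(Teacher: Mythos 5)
Your argument is correct. It rests on the same structural observation as the paper's proof --- that $\mathcal{L}_t^\alpha$ linearizes under $v=\log\widehat{u}$ --- but it then proceeds in the opposite direction: you \emph{derive} the solution constructively (quadratic-in-$\xi$ ansatz $v=-\tfrac{\theta\xi^2}{2\gamma}\phi(t)$, reduction to the scalar Caputo problem $D_t^\alpha\phi+2^\alpha\gamma\phi=2^\alpha\gamma$, $\phi(0)=0$, and Laplace inversion via (\ref{lap})), whereas the paper simply \emph{verifies} that (\ref{ch2}) satisfies (\ref{ch}) by applying the eigenfunction property $D_t^\alpha E_{\alpha,1}(-\gamma(2t)^\alpha)=-2^\alpha\gamma\,E_{\alpha,1}(-\gamma(2t)^\alpha)$ and rewriting $E_{\alpha,1}$ in terms of $\log\widehat{u}$. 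The two are computationally equivalent (the eigenfunction property is itself usually obtained from the Laplace identity you invoke), but your route explains where the Mittag-Leffler profile comes from and pins down the solution uniquely within the Gaussian ansatz class, while the paper's verification is shorter and is preceded by a remark you omit --- that the right-hand side of (\ref{ch}) is exactly the Fourier transform of the drift and diffusion terms of (\ref{fp}) with $\eta=\gamma/2^{1-\alpha}$, $D=\theta/2^{1-\alpha}$, which motivates the coefficients. Your closing remark on positivity is slightly off the mark but harmless: once you posit $\widehat{u}=e^{v}$ with $v$ real, positivity is automatic and no a posteriori appeal to $E_{\alpha,1}\in(0,1]$ is needed to justify dividing by $\widehat{u}$. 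Neither your argument nor the paper's addresses uniqueness outside the respective ansatz/verification framework, so there is no gap relative to what the paper actually establishes.
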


\begin{proof}
We start by writing the Fourier transform of the FP equation, given in (\ref%
{fp}), i.e.%
\begin{eqnarray}
&&\frac{\partial }{\partial t}\widehat{u}(\xi ,t)=\eta \int_{-\infty
}^{+\infty }e^{i\xi x}\frac{\partial }{\partial x}[x u(x,t)]dx-D\xi ^{2}%
\widehat{u}(\xi ,t)  \notag \\
&=&\eta \left\{ \left[ e^{i\xi x}x u(x,t)\right] _{-\infty }^{+\infty }-i\xi
\int_{-\infty }^{+\infty }e^{i\xi x}x u(x,t)dx\right\} -D\xi ^{2}\widehat{u}%
(\xi ,t),  \notag
\end{eqnarray}%
which, with $\eta =\gamma /2^{1-\alpha }$ and $D=\theta /2^{1-\alpha }$,
coincides with the r.h.s. of (\ref{ch}), by considering the boundary
condition.

Since the Mittag-Leffler function (\ref{ml}) is the eigenfunction of the
Caputo fractional derivative (\ref{ca}) (see Lemma 2.23, p.98, in \cite{KIL}%
), we get from (\ref{ch2}), that%
\begin{eqnarray*}
D_{t}^{\alpha }\log \widehat{u}(\xi ,t) &=&-\frac{\mathcal{\theta }\xi ^{2}}{%
2\gamma }D_{t}^{\alpha }\left[ 1-E_{\alpha ,1}(-\gamma (2t)^{\alpha })\right]
\\
&=&-\frac{\mathcal{\theta }\xi ^{2}}{2^{1-\alpha }}E_{\alpha ,1}(-\gamma
(2t)^{\alpha }) \\
&=&-2^{\alpha }\gamma \log \widehat{u}(\xi ,t)-\frac{\mathcal{\theta }\xi
^{2}}{2^{1-\alpha }}
\end{eqnarray*}%
and thus%
\begin{equation*}
\widehat{u}(\xi ,t)D_{t}^{\alpha }\log \widehat{u}(\xi ,t)=\frac{\mathcal{%
\theta }\xi ^{2}}{2^{1-\alpha }}\left[ 1-E_{\alpha ,1}(-\gamma (2t)^{\alpha
})\right] \widehat{u}(\xi ,t)-\frac{\mathcal{\theta }\xi ^{2}}{2^{1-\alpha }}%
\widehat{u}(\xi ,t),
\end{equation*}%
which coincides with (\ref{ch}). The initial condition is verified since $%
E_{\alpha ,1}(0)=1.$
\end{proof}

\begin{remark}
Formula (\ref{ch2}) coincides with the characteristic function of the
process $U_{\alpha }(t)$ obtained, in the previous section, by taking the
limit of the power-law Poisson shot-noise defined in Def.\ref{Def.3}. Moreover, for $%
\alpha =1,$ it reduces to the characteristic function of the OU process
(with starting point in the origin), i.e.%
\begin{equation*}
\widehat{u}(\xi ,t)=\exp \left\{ -\frac{\mathcal{\theta }\xi ^{2}}{2\gamma }%
\left[ 1-e^{-2\gamma t}\right] \right\} .
\end{equation*}
\end{remark}

Similarly to the result obtained in Lemma \ref{L2}, the solution of (\ref{ch})
coincides with the characteristic function of the OU process time-changed by
means of the continuous and increasing function
\begin{equation}
\mathcal{T}_{\alpha }(t)=-\frac{1}{2\gamma }\log E_{\alpha ,1}(-\gamma
(2t)^{\alpha }),  \label{tt}
\end{equation}%
such that $\mathcal{T}_{\alpha }(0)=0.$ Therefore we define the time-changed
OU process, by means of (\ref{tt}), as follows.

\begin{definition}
\textbf{(Time-changed OU process) }Let $X_{1}:=\left\{ X_{1}(t),t\geq
0\right\} $ denote the standard OU process, then we define the time-changed
OU process $\left\{ X_{\alpha }=X_{\alpha }(t),t\geq 0\right\} $, as $%
X_{\alpha }(t):=X_{1}(\mathcal{T}_{\alpha }(t)),$ for $\mathcal{T}_{\alpha
}(\cdot )$ given in (\ref{tt}).

\begin{remark}
The process $X_{\alpha }$ is still Gaussian (being obtained by a
deterministic time-change) with $\mathbb{E}X_{\alpha }(t)=0$, and, for any $%
t,s\geq 0$, $\gamma >0,$ $\alpha \in (0,1],$ has%
\begin{equation}
Cov\left( X_{\alpha }(t),X_{\alpha }(s)\right) =\frac{\mathcal{\theta }}{%
\gamma }\sqrt{\frac{E_{\alpha ,1}(-\gamma 2^{\alpha }(t\vee s)^{\alpha })}{%
E_{\alpha ,1}(-\gamma 2^{\alpha }(t\wedge s)^{\alpha })}}\left[ 1-E_{\alpha
,1}(-\gamma 2^{\alpha }(t\wedge s)^{\alpha })\right] .  \label{cov}
\end{equation}%
Formula (\ref{cov}) can be derived as follows%
\begin{eqnarray*}
Cov\left( X_{\alpha }(t),X_{\alpha }(s)\right) &=&\frac{\mathcal{\theta }}{%
\gamma }e^{-\gamma \mathcal{T}_{\alpha }(t)|}\left[ e^{\gamma \mathcal{T}%
_{\alpha }(s)}-e^{-\gamma \mathcal{T}_{\alpha }(s)}\right] \\
&=&\frac{\mathcal{\theta }}{\gamma }e^{\frac{1}{2}\log E_{\alpha ,1}(-\gamma
(2t)^{\alpha })}\left[ e^{-\frac{1}{2}\log E_{\alpha ,1}(-\gamma
(2s)^{\alpha })}-e^{\frac{1}{2}\log E_{\alpha ,1}(-\gamma (2s)^{\alpha })}%
\right]
\end{eqnarray*}%
for $s<t$ and thus for $\mathcal{T}_{\alpha }(s)<\mathcal{T}_{\alpha }(t)$,
by recalling that the Mittag-Leffler function $E_{\alpha ,1}(x)$ is
completely monotone, for $\alpha \in (0,1),$ on the real negative semi-axis.
Analogously, for $s\geq t.$
\end{remark}
\end{definition}

\begin{remark}
We can easily derive the following representation of $X_{\alpha }$ in terms
of a Brownian motion, by applying again the time change (\ref{tt}) to the
well known representation of the standard OU process:
\begin{eqnarray}
X_{\alpha }(t) &=&\sqrt{\frac{\mathcal{\theta }}{\gamma }}e^{-\gamma
\mathcal{T}_{\alpha }(t)}W(e^{2\gamma \mathcal{T}_{\alpha }(t)}-1)
\label{xa} \\
&=&\sqrt{\frac{\mathcal{\theta }}{\gamma }E_{\alpha ,1}(-2^{\alpha }\gamma
t^{\alpha })}W\left( \frac{1}{E_{\alpha ,1}(-2^{\alpha }\gamma t^{\alpha })}%
-1\right) .  \notag
\end{eqnarray}%
Moreover, it follows from (\ref{cov}) that the process is Markovian, for any
$\alpha $, since
\begin{equation*}
\rho _{X_{\alpha }}(s,t)=\sqrt{\frac{E_{\alpha ,1}(-\gamma 2^{\alpha
}t^{\alpha })\left[ 1-E_{\alpha ,1}(-\gamma 2^{\alpha }s^{\alpha })\right] }{%
E_{\alpha ,1}(-\gamma 2^{\alpha }s^{\alpha })\left[ 1-E_{\alpha ,1}(-\gamma
2^{\alpha }t^{\alpha })\right] }}=\rho _{X_{\alpha }}(s,h)\rho _{X_{\alpha
}}(h,t),\qquad s<h<t,
\end{equation*}%
where $\rho _{X_{\alpha }}(\cdot ,\cdot )$ denotes the autocorrelation
coefficient. On the other hand, in this case, by considering formula (\ref%
{asy})$,$ we get, for any $\tau \geq 0,$%
\begin{equation*}
\lim_{t\rightarrow \infty }Cov\left( X_{\alpha }(t),X_{\alpha }(t+\tau
)\right) =\frac{\mathcal{\theta }}{\gamma }\lim_{t\rightarrow \infty }\sqrt{%
\frac{E_{\alpha ,1}(-\gamma 2^{\alpha }(t+\tau )^{\alpha })}{E_{\alpha
,1}(-\gamma 2^{\alpha }t^{\alpha })}}\left[ 1-E_{\alpha ,1}(-\gamma
2^{\alpha }t^{\alpha })\right] =\frac{\mathcal{\theta }}{\gamma }.
\end{equation*}%
Thus the process $X_{\alpha }$ asymptotically behaves as a random walk.
Moreover it does not coincide, in the limit, with the stationary OU process
(which we denote by $\overline{X}_{1}$) time-changed by (\ref{tt}), i.e.
with $\overline{X}_{\alpha }(t)=\overline{X}_{1}(\mathcal{T}_{\alpha }(t)),$
for $\mathcal{T}_{\alpha }(\cdot )$ given in (\ref{tt}). Indeed the latter
is a Gaussian (non-stationary) process with $\mathbb{E}\overline{X}_{\alpha
}(t)=0$, and%
\begin{equation}
Cov\left( \overline{X}_{\alpha }(t),\overline{X}_{\alpha }(s)\right) =\frac{%
\mathcal{\theta }}{\gamma }\sqrt{\frac{E_{\alpha ,1}(-\gamma 2^{\alpha
}(t\vee s)^{\alpha })}{E_{\alpha ,1}(-\gamma 2^{\alpha }(t\wedge s)^{\alpha
})}},  \label{ss}
\end{equation}%
for any $t,s\geq 0$, $\gamma >0,$ $\alpha \in (0,1].$ Formula (\ref{ss}) is
obtained as follows: for $t>s$%
\begin{eqnarray*}
Cov\left( \overline{X}_{\alpha }(t),\overline{X}_{\alpha }(s)\right) &=&%
\frac{\mathcal{\theta }}{\gamma }e^{-\gamma |\mathcal{T}_{\alpha }(t)-%
\mathcal{T}_{\alpha }(s)|}=\frac{\mathcal{\theta }}{\gamma }e^{\frac{1}{2}%
\left[ \log E_{\alpha ,1}(-\gamma 2^{\alpha }t^{\alpha })-\log E_{\alpha
,1}(-\gamma 2^{\alpha }s^{\alpha })\right] } \\
&=&\frac{\mathcal{\theta }}{\gamma }\sqrt{\frac{E_{\alpha ,1}(-\gamma
2^{\alpha }t^{\alpha })}{E_{\alpha ,1}(-\gamma 2^{\alpha }s^{\alpha })}}.
\end{eqnarray*}
\end{remark}

\subsection{Fractional OU process}

In order to overcome the anomalous asymptotic behavior of the previous
model, we start by defining the fractional Ornstein-Uhlenbeck process in the
stationary case, by means of the following preliminary result.

\begin{lemma}
\label{L11}
The function
\begin{equation*}
f(s)=E_{\alpha ,1}(-\gamma |s|^{\alpha })\text{, }s\in \mathbb{R},
\end{equation*}%
is positive definite.
\end{lemma}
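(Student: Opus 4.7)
The plan is to represent $f$ as a non-negative superposition of functions that are manifestly positive definite, and then use the fact that the class of positive definite functions is closed under non-negative mixtures.

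First I would invoke the result (already recalled in the introduction, attributed to \cite{SCH}) that $E_{\alpha,1}(-\gamma t^{\alpha})$ is completely monotone in $t \geq 0$ for $\alpha \in (0,1]$. By Bernstein's theorem, there is a non-negative (sigma-finite) measure $\mu_\alpha$ on $[0,\infty)$ such that
\begin{equation*}
E_{\alpha,1}(-\gamma t^{\alpha}) = \int_{0}^{\infty} e^{-ut}\, \mu_\alpha(du), \qquad t \geq 0,
\end{equation*}
and since the left-hand side equals $1$ at $t=0$, the measure $\mu_\alpha$ is actually a probability measure. Substituting $t = |s|$ gives the integral representation
\begin{equation*}
f(s) = \int_{0}^{\infty} e^{-u|s|}\, \mu_\alpha(du), \qquad s \in \mathbb{R}.
\end{equation*}

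Next I would observe that for each fixed $u > 0$, the function $s \mapsto e^{-u|s|}$ is positive definite on $\mathbb{R}$, because it is the characteristic function of the Cauchy distribution with scale parameter $u$; explicitly,
\begin{equation*}
e^{-u|s|} = \int_{-\infty}^{+\infty} e^{is\xi}\, \frac{u}{\pi(u^{2}+\xi^{2})}\, d\xi.
\end{equation*}
For $u=0$ the constant function $1$ is trivially positive definite.

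Finally, the class of positive definite functions is a convex cone closed under pointwise non-negative integration, so the mixture $f(s) = \int_0^\infty e^{-u|s|}\, \mu_\alpha(du)$ is itself positive definite. (To be fully rigorous one checks, for any $s_1,\dots,s_n \in \mathbb{R}$ and $c_1,\dots,c_n \in \mathbb{C}$, that
\begin{equation*}
\sum_{j,k=1}^{n} c_j \bar{c}_k f(s_j - s_k) = \int_0^\infty \sum_{j,k=1}^n c_j \bar c_k e^{-u|s_j - s_k|} \, \mu_\alpha(du) \geq 0,
\end{equation*}
the inner sum being non-negative for every $u$ by the positive definiteness of $e^{-u|\cdot|}$.) The only substantive step is the passage from complete monotonicity to the Laplace representation via Bernstein's theorem; everything else is formal. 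I expect no real obstacle, since both the CM property and Bernstein's theorem are standard tools that the paper has already set up.
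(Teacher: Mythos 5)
Your proof is correct, but it follows a genuinely different route from the paper's. The paper invokes Schoenberg's characterization of radial positive definite functions on all of $\mathbb{R}^{d}$: writing $f(s)=\varphi (s^{2})$ with $\varphi (u)=E_{\alpha ,1}(-\gamma u^{\alpha /2})$, it reduces the claim to the complete monotonicity of $\varphi $, which follows from the CM property of $E_{\alpha ,1}$ of negative argument composed with the Bernstein function $u\mapsto u^{\alpha /2}$. You instead establish complete monotonicity of $t\mapsto E_{\alpha ,1}(-\gamma t^{\alpha })$ (the same two ingredients, with the Bernstein function $t\mapsto t^{\alpha }$), apply Bernstein--Widder to get the representation $f(s)=\int_{0}^{\infty }e^{-u|s|}\,\mu _{\alpha }(du)$ with $\mu _{\alpha }$ a probability measure, and conclude via the positive definiteness of the Cauchy characteristic function $e^{-u|\cdot |}$ and closure of positive definite functions under non-negative mixing. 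Your argument is more elementary and self-contained --- it amounts to re-proving the one-dimensional case of the relevant Schoenberg-type theorem from scratch --- and it suffices because the lemma only asserts positive definiteness on $\mathbb{R}$. What the paper's appeal to Schoenberg buys in exchange is economy (one citation replaces the Bernstein representation and the mixture computation) and the stronger conclusion that $E_{\alpha ,1}(-\gamma \Vert \cdot \Vert ^{\alpha })$ is positive definite on $\mathbb{R}^{d}$ for every $d$, though that extra generality is not used. One small point worth making explicit in your write-up: the complete monotonicity of $E_{\alpha ,1}(-\gamma t^{\alpha })$ is not literally the statement cited from \cite{SCH} (which concerns $E_{\alpha ,1}(-x)$ as a function of $x$); you need the additional composition-with-a-Bernstein-function step, exactly as the paper does for the exponent $\alpha /2$.
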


\begin{proof}
We apply the Schoenberg's characterization: let $\varphi :[0,+\infty
)\rightarrow \mathbb{R}$ be a continuous function, then $\Phi (\cdot
)=\varphi (\left\Vert \cdot \right\Vert ^{2})$ is positive definite and
radial on $\mathbb{R}^{d},$ for any $d$, if and only if $\varphi (\cdot )$
is CM on $[0,+\infty )$. Then, we note that the complete monotonicity of the
Mittag-Leffler function of negative argument and that the composition of a
CM functional and a Bernstein function is again CM (see \cite{MAI} for
details). Since $s^{\alpha /2}$ is a Bernstein function, the result follows
by considering that $E_{\alpha ,1}(-\gamma s^{\alpha /2})$ is CM.
\end{proof}

With this at hand we can give the following definition:

\begin{definition}
\label{Def.12}
\textbf{(Fractional stationary OU) }Let $\overline{\mathcal{Y}}_{\alpha
}:=\left\{ \overline{\mathcal{Y}}_{\alpha }(t),t\geq 0\right\} $ be defined
as a Gaussian process with $\mathbb{E}\overline{\mathcal{Y}}_{\alpha }(t)=0$%
, for any $t\geq 0$ and
\begin{equation}
r(s):=Cov\left( \overline{\mathcal{Y}}_{\alpha }(t),\overline{\mathcal{Y}}%
_{\alpha }(t+s)\right) =\frac{\mathcal{\theta }}{\gamma }E_{\alpha
,1}(-\gamma |s|^{\alpha }),  \label{cov2}
\end{equation}%
for any $s\in \mathbb{R},$ $\gamma >0,$ $\alpha \in (0,1].$
\end{definition}

\begin{remark}
Long-range dependent processes with autocorrelation function decaying as a
Mittag-Leffler have been already studied in \cite{BAR}, where they are
obtained, in a completely different way, i.e. by superposition of OU
processes.
\end{remark}

Then, for the spectral density of the fractional stationary OU defined
above, we prove the following result.

\begin{lemma}
The spectral density of the process $\overline{\mathcal{Y}}_{\alpha }$ is
given by
\begin{equation}
\mathcal{S}_{\overline{\mathcal{Y}}_{\alpha }}(\omega )=\frac{\mathcal{%
\theta }}{\sqrt{\pi }\gamma }H_{2,3}^{2,1}\left[ \left. \frac{|\omega
|^{\alpha }}{2^{\alpha }\gamma }\right\vert
\begin{array}{ccc}
(0,1) & \left( 1-\alpha ,\alpha \right) & \quad \\
(\frac{1}{2}-\frac{\alpha }{2},\frac{\alpha }{2}) & (0,1) & (1-\frac{\alpha
}{2},\frac{\alpha }{2})%
\end{array}%
\right] ,\qquad \omega \in \mathbb{R}\backslash \{0\},  \label{cov4}
\end{equation}%
where $H_{p,q}^{m,n}$ is the H-function defined in (\ref{hh}). Then its
spectral representation reads%
\begin{equation}
\overline{\mathcal{Y}}_{\alpha }(t)=\int_{-\infty }^{+\infty }\sqrt{\mathcal{%
S}_{\overline{\mathcal{Y}}_{\alpha }}(\omega )}e^{it\omega }dW(\omega ).
\label{rep}
\end{equation}
\end{lemma}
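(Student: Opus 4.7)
The plan is to identify the spectral density $\mathcal{S}_{\overline{\mathcal{Y}}_{\alpha}}$ with the (properly normalized) Fourier transform of the covariance $r(s)$ and then rewrite the result in Fox-$H$ form. Since $\overline{\mathcal{Y}}_{\alpha}$ is a zero-mean stationary Gaussian process whose covariance is positive definite by Lemma \ref{L11}, the spectral density exists and, by the Wiener--Khintchine theorem together with the evenness of $r$, is given by
\begin{equation*}
\mathcal{S}_{\overline{\mathcal{Y}}_{\alpha}}(\omega)=\frac{1}{2\pi}\int_{-\infty}^{+\infty}r(s)e^{-i\omega s}\,ds=\frac{\theta}{\pi\gamma}\int_{0}^{+\infty}E_{\alpha,1}(-\gamma s^{\alpha})\cos(\omega s)\,ds.
\end{equation*}
The entire problem thus reduces to computing the cosine transform of a Mittag--Leffler function.

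For this, I would insert the standard Mellin--Barnes representation
\begin{equation*}
E_{\alpha,1}(-\gamma s^{\alpha})=\frac{1}{2\pi i}\int_{L}\frac{\Gamma(z)\Gamma(1-z)}{\Gamma(1-\alpha z)}(\gamma s^{\alpha})^{-z}\,dz,
\end{equation*}
with $L$ a contour separating the poles of $\Gamma(z)$ from those of $\Gamma(1-z)$, and swap the $z$- and $s$-integrals (justified by absolute convergence on a suitable vertical strip). The inner integral is the classical cosine transform of a power,
\begin{equation*}
\int_{0}^{+\infty}s^{-\alpha z}\cos(\omega s)\,ds=\Gamma(1-\alpha z)\sin(\pi\alpha z/2)|\omega|^{\alpha z-1},
\end{equation*}
and its $\Gamma(1-\alpha z)$ factor cancels exactly the one already sitting in the denominator of the Mellin--Barnes representation. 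What remains is a single Mellin--Barnes integral with integrand $\Gamma(z)\Gamma(1-z)\sin(\pi\alpha z/2)$ times a power of $|\omega|^{\alpha}/\gamma$.

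The last step is to massage this integrand into the form (\ref{hh}) with the parameters displayed in (\ref{cov4}). I would eliminate the sine using Euler's reflection $\sin(\pi\alpha z/2)=\pi/[\Gamma(\alpha z/2)\Gamma(1-\alpha z/2)]$, and then apply Legendre's duplication formula $\Gamma(2w)=2^{2w-1}\pi^{-1/2}\Gamma(w)\Gamma(w+1/2)$ to split $\Gamma(\alpha z/2)$ and $\Gamma(1-\alpha z/2)$ into factors with arguments $(1-\alpha)/2+\alpha z/2$, $1-\alpha/2+\alpha z/2$, $\alpha(1-z)/2$ and $1-\alpha+\alpha z$. The same duplication supplies the extra factor $2^{\alpha z}$ that combines with $(|\omega|^{\alpha}/\gamma)^{z}$ to convert the argument into $|\omega|^{\alpha}/(2^{\alpha}\gamma)$, as required. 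A change of variable $z\mapsto 1-z$ then flips the sign of the exponent to the form $x^{-z}$ demanded by (\ref{hh}), and the surviving gamma factors read off exactly the parameters $(0,1),(1-\alpha,\alpha);((1-\alpha)/2,\alpha/2),(0,1),(1-\alpha/2,\alpha/2)$. The spectral representation (\ref{rep}) then follows immediately from Cram\'er's spectral representation theorem for a zero-mean stationary Gaussian process admitting a spectral density.

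The main obstacle is this last identification: the reflection+duplication step produces several intermediate gamma factors, and one has to arrange the cancellations so that exactly three numerator and two denominator gammas survive, with no leftover powers of $|\omega|$ or $\gamma$ outside the H-function. A careless application instead yields an equivalent but differently parameterized H-function (for instance $H_{2,2}^{1,1}$ with argument $\gamma/|\omega|^{\alpha}$), which then has to be reconciled with the stated form by the standard translation and inversion identities for Fox H-functions; along the way one must also verify that the shifted contour continues to satisfy condition (\ref{1.6}).
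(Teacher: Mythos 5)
Your plan is correct and follows essentially the same route as the paper: both reduce the spectral density, via the Wiener--Khintchine relation and Lemma \ref{L11} (which also gives nonnegativity through Bochner's theorem), to the cosine transform of $E_{\alpha,1}(-\gamma s^{\alpha})$, and then recast that transform as the H-function (\ref{cov4}). The only difference is that the paper cites the tabulated cosine-transform formula for H-functions (formula (2.50) in \cite{MAT}) together with the inversion and translation identities (1.12.45)--(1.12.47) of \cite{KIL}, whereas you propose to re-derive exactly these via the Mellin--Barnes representation, reflection, and duplication --- a sound but equivalent computation (note only that the inner cosine integral converges conditionally rather than absolutely, so the interchange needs an Abel-type regularization rather than plain Fubini).
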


\begin{proof}
From (\ref{cov2}) we can write, for any $t\geq 0$ and $\omega \in \mathbb{R}$%
,%
\begin{eqnarray}
\mathcal{S}_{\overline{\mathcal{Y}}_{\alpha }}(\omega ) &=&\frac{1}{2\pi }%
\int_{-\infty }^{+\infty }e^{-i\omega s}Cov(\overline{\mathcal{Y}}_{\alpha
}(t),\overline{\mathcal{Y}}_{\alpha }(t+s))ds  \label{cov3} \\
&=&\frac{\mathcal{\theta }}{\pi \gamma }\int_{0}^{+\infty }\cos (|\omega
|s)E_{\alpha ,1}(-\gamma s^{\alpha })ds  \notag \\
&=&[\text{by (1.12.65) in \cite{KIL}}]  \notag \\
&=&\frac{\mathcal{\theta }}{\pi \gamma }\int_{0}^{+\infty }\cos (|\omega
|s)H_{1,2}^{1,1}\left[ \left. \gamma s^{\alpha }\right\vert
\begin{array}{c}
\left( 0,1\right) \\
\left( 0,1\right) (0,\alpha )%
\end{array}%
\right] ds.  \notag
\end{eqnarray}%
In order to evaluate the previous cosine transform, we apply formula (2.50),
for $\rho =1,$ of \cite{MAT}, i.e.%
\begin{equation*}
\int_{0}^{+\infty }\cos (ax)H_{p,q}^{m,n}\left[ \left. bx^{\sigma
}\right\vert
\begin{array}{c}
(a_{p},A_{p}) \\
(b_{q},B_{q})%
\end{array}%
\right] dt=\frac{\sqrt{\pi }}{a}H_{p+2,q}^{m,n+1}\left[ \left. b\left( \frac{%
2}{a}\right) ^{\sigma }\right\vert
\begin{array}{ccc}
(\frac{1}{2},\frac{\sigma }{2}) & (a_{p},A_{p}) & \left( 0,\frac{\sigma }{2}%
\right) \\
(b_{q},B_{q}) & \quad & \quad%
\end{array}%
\right] ,
\end{equation*}%
for $a,\alpha ,\sigma >0,$ $b\in \mathbb{C},$ $\mathcal{R}\left[ 1+\sigma
\max_{1\leq j\leq n}\left\{ \frac{a_{j}-1}{A_{j}}\right\} \right] <1$ and $%
\mathcal{R}\left[ 1+\sigma \min_{1\leq j\leq m}\left\{ \frac{b_{j}}{B_{j}}%
\right\} \right] >0$; $arg|a|<\pi \alpha /2$, where $\alpha
=\sum_{j=1}^{m}B_{j}-\sum_{j=m+1}^{q}B_{j}+\sum_{j=1}^{n}A_{j}-%
\sum_{j=n+1}^{p}A_{j}.$

It is easy to check that the corresponding conditions are all satisfied,
since $1+\alpha \left\{ \frac{a_{1}-1}{A_{1}}\right\} =1-\alpha <1$, $%
1+\alpha \frac{b_{1}}{B_{1}}=1>0$ and $\alpha =B_{1}-B_{2}+A_{1}=2-\alpha
>0. $ Then (\ref{cov3}) becomes%
\begin{eqnarray}
\mathcal{S}_{\overline{\mathcal{Y}}_{\alpha }}(\omega ) &=&\frac{\mathcal{%
\theta }}{\sqrt{\pi }\gamma |\omega |}H_{3,2}^{1,2}\left[ \left. \frac{%
2^{\alpha }\gamma }{|\omega |^{\alpha }}\right\vert
\begin{array}{ccc}
(\frac{1}{2},\frac{\alpha }{2}) & (0,1) & (0,\frac{\alpha }{2}) \\
(0,1) & (0,\alpha ) & \quad%
\end{array}%
\right]  \label{tr} \\
&=&[\text{by (1.12.47) in \cite{KIL}}]  \notag \\
&=&\frac{\mathcal{\theta }}{\sqrt{\pi }\gamma |\omega |}H_{2,3}^{2,1}\left[
\left. \frac{|\omega |^{\alpha }}{2^{\alpha }\gamma }\right\vert
\begin{array}{ccc}
(1,1) & (1,\alpha ) & \quad \\
(\frac{1}{2},\frac{\alpha }{2}) & (1,1) & (1,\frac{\alpha }{2})%
\end{array}%
\right] ,  \notag
\end{eqnarray}%
which coincides with (\ref{cov4}), by applying formula (1.12..45) in \cite%
{KIL}, for $\sigma =-1/\alpha $. The existence condition in \cite{MAT} is
satisfied, for any $\omega \neq 0,$ by (\ref{cov4}), since $\alpha =\alpha
/2>0,$ by choosing the contour $\mathcal{L}=\mathcal{L}_{i\gamma \infty },$
since in this case $\mu =\sum_{j=1}^{q}B_{j}-\sum_{j=1}^{p}A_{j}=0$ (see
Theorem 1.1, case 7, in \cite{MAT}). Finally, the function (\ref{cov4}) is
positive, since (\ref{cov2}) is positive definite, by the Bochner's theorem.
The representation (\ref{rep}) easily follows.
\end{proof}

\begin{remark}
We can check that in the special case $\alpha =1$ formula (\ref{cov4})
reduces to the well-known spectral density of the OU process:%
\begin{eqnarray*}
\mathcal{S}_{\overline{\mathcal{Y}}_{\alpha }}(\omega ) &=&\frac{\mathcal{%
\theta }}{2\gamma \sqrt{\pi }}H_{2,3}^{2,1}\left[ \left. \frac{|\omega |}{%
2\gamma }\right\vert
\begin{array}{ccc}
(0,1) & \left( 0,1\right) & \quad \\
(0,\frac{1}{2}) & (0,1) & (\frac{1}{2},\frac{1}{2})%
\end{array}%
\right] \\
&=&\frac{\mathcal{\theta }}{2\gamma \sqrt{\pi }}\frac{1}{2\pi i}\int_{%
\mathcal{L}}\left( \frac{|\omega |}{2\gamma }\right) ^{-s}\frac{\Gamma
\left( \frac{s}{2}\right) \Gamma (1-s)}{\Gamma \left( \frac{1}{2}-\frac{s}{2}%
\right) }ds \\
&=&[\text{by the duplication formula}] \\
&=&\frac{\mathcal{\theta }}{2\pi \gamma }\frac{1}{2\pi i}\int_{\mathcal{L}%
}\left( \frac{|\omega |}{\gamma }\right) ^{-s}\Gamma \left( \frac{s}{2}%
\right) \Gamma \left( 1-\frac{s}{2}\right) ds \\
&=&\frac{\mathcal{\theta }}{2\pi \gamma }H_{1,1}^{1,1}\left[ \left. \frac{%
|\omega |}{\gamma }\right\vert
\begin{array}{c}
\left( 0,\frac{1}{2}\right) \\
\left( 0,\frac{1}{2}\right)%
\end{array}%
\right] \\
&=&[\text{see (4.8) in \cite{VEL}}] \\
&=&\frac{\mathcal{\theta }\gamma }{\pi }\frac{1}{\gamma ^{2}+\omega ^{2}}.
\end{eqnarray*}%
for $\omega \in \mathbb{R}.$
\end{remark}

In order to derive the dependence properties of the process $\overline{%
\mathcal{Y}}_{\alpha }$ we apply Theorem 1.2, p.19, in \cite{MAT}: the
asymptotic behavior, for $\left\vert \omega \right\vert \rightarrow 0$, of
the spectral density (\ref{tr}) is then given by%
\begin{equation}
\mathcal{S}_{\overline{\mathcal{Y}}_{\alpha }}(\omega )=\frac{1}{|\omega |}%
O(\left\vert \omega \right\vert ^{\alpha })=O(\left\vert \omega \right\vert
^{\alpha -1}),\qquad \left\vert \omega \right\vert \rightarrow 0,
\label{big}
\end{equation}%
since $c=\min \left[ 1/\alpha ,1\right] =1.$ As a consequence, we can
conclude that the process $\overline{\mathcal{Y}}_{\alpha }$ exhibits
long-range dependence under the assumption that $\alpha \in (0,1)$, while
for $\alpha =1$ we have $\frac{\mathcal{S}_{\mathcal{Y}}(\omega )}{%
\left\vert \omega \right\vert ^{a }}\rightarrow \infty $, for any $a \in
(0,1),$ as it is well-known in the standard OU case. The same conclusion
could be drawn form the covariance function (\ref{cov2}), taking into
account (\ref{asy}), so that $r(s)\sim C_{\alpha }s^{-\alpha },$ for some
constant $C_{\alpha }$ and $s\rightarrow +\infty .$ Thus the process is
long-memory of order $1- \alpha$, both in the covariance and in the spectral
density sense.

We prove the following properties for the trajectories of the fractional
stationary OU process.

\begin{theorem}
There exists a version of $\overline{\mathcal{Y}}_{\alpha }$ with continuous
trajectories and the latter are not differentiable in the $L^{2}$-norm.

\begin{proof}
In order to apply the Kolmogorov continuity theorem, we only need to check
that there exist $p,\eta >1$, such that, for a constant $c$ and for any $%
t_{1},t_{2}\in I$, the following inequality holds:%
\begin{equation*}
\mathbb{E}\left\vert \overline{\mathcal{Y}}_{\alpha }(t_{2})-\overline{%
\mathcal{Y}}_{\alpha }(t_{1})\right\vert ^{p}\leq c\left\vert
t_{2}-t_{1}\right\vert ^{\eta }.
\end{equation*}%
Let $\left\lceil x\right\rceil $ denote the upper integer part of $x\in
\mathbb{R}$, then, by choosing $p=\left\lceil 2/\alpha \right\rceil $, $\eta
=\frac{\alpha p}{2}>1$, the previous inequality is verified; indeed
\begin{eqnarray*}
\mathbb{E}\left\vert \overline{\mathcal{Y}}_{\alpha }(t_{2})-\overline{%
\mathcal{Y}}_{\alpha }(t_{1})\right\vert ^{p} &=&\frac{2^{p}\Gamma ((p+1)/2)%
}{\sqrt{\pi }\gamma ^{p/2}}\left[ \mathbb{E}\left[ \overline{\mathcal{Y}}%
_{\alpha }(t_{2})-\overline{\mathcal{Y}}_{\alpha }(t_{1})\right] ^{2}\right]
^{p/2} \\
&=&C_{p}\left[ 1-E_{\alpha ,1}(-\gamma \left\vert t_{2}-t_{1}\right\vert
^{\alpha })\right] ^{p/2} \\
&=&\gamma ^{p/2}C_{p}\left\vert t_{2}-t_{1}\right\vert ^{\alpha p/2}\left[
E_{\alpha ,\alpha +1}(-\gamma \left\vert t_{2}-t_{1}\right\vert ^{\alpha })%
\right] ^{p/2} \\
&\leq &\frac{\gamma ^{p/2}C_{p}}{\Gamma (\alpha +1)^{p/2}}\left\vert
t_{2}-t_{1}\right\vert ^{\alpha p/2},
\end{eqnarray*}%
(where $C_{p}=2^{p}\theta ^{p/2}\Gamma ((p+1)/2)/\gamma ^{p/2}\sqrt{\pi })$,
since $E_{\alpha ,\alpha +1}(-\gamma \left\vert t_{2}-t_{1}\right\vert
^{\alpha })\leq 1/\Gamma (\alpha +1)$. Indeed $E_{\alpha ,\alpha
+1}(0)=1/\Gamma (\alpha +1)$ and the function is completely monotone and
non-increasing (see \cite{GOR}, Sec. 4.10.2)$.$ Moreover we have that
\begin{equation*}
\mathbb{E}\left( \frac{\overline{\mathcal{Y}}_{\alpha }(t_{2})-\overline{%
\mathcal{Y}}_{\alpha }(t_{1})}{t_{2}-t_{1}}\right) ^{2}=2\mathcal{\theta }%
\left\vert t_{2}-t_{1}\right\vert ^{\alpha -2}E_{\alpha ,\alpha +1}(-\gamma
\left\vert t_{2}-t_{1}\right\vert ^{\alpha })\rightarrow +\infty ,
\end{equation*}%
for $t_{2}\rightarrow t_{1}$ and $\alpha \in (0,1],$ by considering that $%
lim_{x\rightarrow 0}E_{\alpha ,\alpha +1}(-x^{\alpha })=1/\Gamma (\alpha
+1). $ Alternatively, we could notice that the covariance function can be
expanded as follows: $r(\tau )=r(0)-\frac{\gamma |\tau |^{\alpha }}{\Gamma
(\alpha +1)}+o(|\tau |^{\alpha }).$
\end{proof}
\end{theorem}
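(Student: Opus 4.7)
The plan is to reduce both claims to the single computation of the variance of the increment $\overline{\mathcal{Y}}_{\alpha}(t_2)-\overline{\mathcal{Y}}_{\alpha}(t_1)$, exploiting Gaussianity. Since $\overline{\mathcal{Y}}_{\alpha}$ is centered Gaussian, the increment is a centered Gaussian random variable with variance
\[
\sigma^2(t_1,t_2):=\mathbb{E}[\overline{\mathcal{Y}}_{\alpha}(t_2)-\overline{\mathcal{Y}}_{\alpha}(t_1)]^2=2[r(0)-r(t_2-t_1)]=\frac{2\theta}{\gamma}\bigl[1-E_{\alpha,1}(-\gamma|t_2-t_1|^{\alpha})\bigr],
\]
so the absolute $p$-th moment equals a constant $C_p$ times $\sigma(t_1,t_2)^p$. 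The crucial algebraic identity (easily checked from the series \eqref{ml}) is
\[
1-E_{\alpha,1}(-z)=z\,E_{\alpha,\alpha+1}(-z),\qquad z\geq 0,
\]
which, combined with complete monotonicity of $E_{\alpha,\alpha+1}$ on the negative semi-axis for $\alpha\in(0,1]$ (giving $E_{\alpha,\alpha+1}(-z)\leq E_{\alpha,\alpha+1}(0)=1/\Gamma(\alpha+1)$), yields the sharp-order estimate
\[
\sigma^2(t_1,t_2)\leq \frac{2\theta}{\Gamma(\alpha+1)}\,|t_2-t_1|^{\alpha}.
\]

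\textbf{Continuity.} I would then invoke the Kolmogorov continuity theorem, which demands constants $p,\eta>1$ and $c>0$ such that $\mathbb{E}|\overline{\mathcal{Y}}_{\alpha}(t_2)-\overline{\mathcal{Y}}_{\alpha}(t_1)|^p\leq c|t_2-t_1|^{1+\eta}$. The Gaussian identity $\mathbb{E}|\overline{\mathcal{Y}}_{\alpha}(t_2)-\overline{\mathcal{Y}}_{\alpha}(t_1)|^p=C_p\,\sigma(t_1,t_2)^p$ together with the bound above gives an inequality of order $|t_2-t_1|^{\alpha p/2}$. Choosing any integer $p$ with $p>2/\alpha$ produces the required exponent $\alpha p/2>1$, so a continuous modification exists.

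\textbf{Non-differentiability in $L^2$.} For the second part, I would directly compute the $L^2$-limit of the difference quotient. Using the identity above,
\[
\mathbb{E}\!\left(\frac{\overline{\mathcal{Y}}_{\alpha}(t_2)-\overline{\mathcal{Y}}_{\alpha}(t_1)}{t_2-t_1}\right)^{\!2}=2\theta\,|t_2-t_1|^{\alpha-2}\,E_{\alpha,\alpha+1}(-\gamma|t_2-t_1|^{\alpha}).
\]
As $t_2\to t_1$, the Mittag-Leffler factor tends to $1/\Gamma(\alpha+1)>0$ while $|t_2-t_1|^{\alpha-2}\to+\infty$ (since $\alpha-2<0$ for $\alpha\in(0,1]$), so the difference quotient diverges in $L^2$. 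Equivalently, one can expand $r(\tau)=r(0)-\gamma|\tau|^{\alpha}/\Gamma(\alpha+1)+o(|\tau|^{\alpha})$ and read the same conclusion off the leading fractional term.

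\textbf{Main obstacle.} The essentially calculational steps are all straightforward; the one non-trivial ingredient is recognising the identity $1-E_{\alpha,1}(-z)=zE_{\alpha,\alpha+1}(-z)$ and exploiting complete monotonicity of $E_{\alpha,\alpha+1}$ to turn a ``$1-E_{\alpha,1}$'' behaviour into a clean power $|t_2-t_1|^{\alpha}$. Without this, one is left with an asymptotic estimate (from \eqref{asy}) good only for large argument, which is useless for the small-time behaviour that Kolmogorov's criterion requires.
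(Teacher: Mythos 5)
Your proposal is correct and follows essentially the same route as the paper: Gaussianity reduces everything to the increment variance, the identity $1-E_{\alpha,1}(-z)=zE_{\alpha,\alpha+1}(-z)$ together with complete monotonicity gives the clean bound $|t_2-t_1|^{\alpha p/2}$ for Kolmogorov's criterion, and the same difference-quotient computation yields $L^2$-non-differentiability. The only blemish is a notational slip in stating the Kolmogorov condition (you write both ``$\eta>1$'' and exponent ``$1+\eta$''; what is needed, and what you actually use, is exponent $\alpha p/2>1$), which does not affect the argument.
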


We can now define a non-stationary version of the fractional OU process,
whose density function coincides with the solution of the fractional
Fokker-Planck equation (\ref{ch}), in Fourier space. We start by the
following preliminary result.

\begin{lemma}
The function
\begin{equation*}
f(s)=\sqrt{E_{\alpha ,1}(-ks^{\alpha })}\text{, }s\in \mathbb{R},\text{ }k>0
\end{equation*}%
is positive definite.
\end{lemma}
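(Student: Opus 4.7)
The approach mirrors the one used in Lemma \ref{L11}. By Schoenberg's characterization of radial positive-definite functions, the positive-definiteness of $f(s)=\sqrt{E_{\alpha,1}(-k|s|^{\alpha})}$ on $\mathbb{R}$ is equivalent to the complete monotonicity on $[0,+\infty)$ of $\varphi(u):=\sqrt{E_{\alpha,1}(-k u^{\alpha/2})}$, since $|s|^{\alpha}=(s^{2})^{\alpha/2}$. Because $u\mapsto u^{\alpha/2}$ is a Bernstein function for $\alpha\in(0,1]$, and composition of a CM function with a Bernstein function is again CM, the problem reduces to showing that $x\mapsto \sqrt{E_{\alpha,1}(-kx)}$ is CM on $[0,+\infty)$.

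To extract a square root of the CM function $E_{\alpha,1}(-kx)$ while preserving complete monotonicity, I would invoke the subclass of \emph{logarithmically completely monotone} (LCM) functions: a positive function $h$ is LCM if $-\log h$ is a Bernstein function, equivalently if $h=e^{-g}$ with $g$ Bernstein. Every LCM function is CM (since $e^{-tg}$ is CM for every $t\geq 0$ whenever $g$ is Bernstein), and the LCM property is preserved under any positive real power, because $h^{c}=e^{-cg}$ and $cg$ is still Bernstein. Consequently, once $E_{\alpha,1}(-kx)$ is known to be LCM on $[0,+\infty)$, the function $\sqrt{E_{\alpha,1}(-kx)}$ is automatically LCM, hence CM, and combined with the Bernstein composition above, Schoenberg's theorem closes the proof.

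The substantive step is therefore the LCM property of the Mittag--Leffler function $x\mapsto E_{\alpha,1}(-kx)$ for $\alpha\in(0,1]$, and I expect this to be the main obstacle. The natural route is to exhibit a L\'{e}vy--Khintchine representation for $-\log E_{\alpha,1}(-kx)$, equivalently to show that the non-negative density $\rho_{\alpha,k}$ appearing in the Pollard representation $E_{\alpha,1}(-kx)=\int_{0}^{+\infty}e^{-xt}\rho_{\alpha,k}(t)\,dt$ is infinitely divisible on $[0,+\infty)$. This infinite divisibility, together with the resulting Bernstein character of $-\log E_{\alpha,1}(-kx)$, is part of the well-developed theory of Mittag--Leffler-type functions and subordinators, and can be cited from the reference \cite{MAI} already invoked in Lemma \ref{L11}; no additional computations beyond the Schoenberg reduction and the LCM-power argument should then be needed.
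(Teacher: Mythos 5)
Your proposal follows the same skeleton as the paper's proof --- the Schoenberg reduction to complete monotonicity of $\varphi(u)=\sqrt{E_{\alpha,1}(-ku^{\alpha/2})}$ and the closure of CM under composition with the Bernstein function $u\mapsto u^{\alpha/2}$ --- but it diverges at exactly the step that matters. The paper disposes of the square root with the one-line assertion that ``if $f(t)$ is a CM function, then the same is true for $\sqrt{f(t)}$'', to be ``checked directly, by differentiating''. Differentiating does confirm the first two sign conditions (via Cauchy--Schwarz on the representing measure), but the general principle is false: if CM were closed under square roots it would be closed under all dyadic powers (products of CM functions are CM) and hence, by pointwise limits, under all positive powers, which is the known characterization of the strictly smaller class of logarithmically completely monotone functions; e.g.\ $f(x)=e^{-x}+\tfrac12$ is CM while $-(\log f)'=(1+\tfrac12e^{x})^{-1}$ is not even convex near $x=0$, so $f$ is not LCM. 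Your replacement --- establish that $x\mapsto E_{\alpha,1}(-kx)$ is LCM and use that LCM is stable under positive real powers --- is therefore not a stylistic variant but the repair the argument actually needs, and the chain LCM $\Rightarrow$ CM of the square root $\Rightarrow$ CM of $\varphi$ by Bernstein composition $\Rightarrow$ positive definiteness is correct.

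The genuine gap in your version is the substantive input itself: the logarithmic complete monotonicity of $E_{\alpha,1}(-kx)$, equivalently the infinite divisibility of the Mittag-Leffler law of which it is the Laplace transform. You propose to cite this from \cite{MAI}, but that reference only establishes complete monotonicity of $E_{\alpha}(-t^{\alpha})$ and its spectral representation; it contains neither the LCM property nor the infinite divisibility of the representing (Pollard) density. The fact is true and can be found in the literature on Mittag-Leffler and Lamperti-type distributions (for $\alpha=\tfrac12$ it reduces to the infinite divisibility of the half-normal law), but as written your proof defers its only nontrivial step to a source that does not contain it. To close the argument you would need either to exhibit the L\'{e}vy--Khintchine (Bernstein) representation of $-\log E_{\alpha,1}(-kx)$ or to cite a reference that actually proves the infinite divisibility of the Mittag-Leffler distribution.
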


\begin{proof}
Similarly to Lemma \ref{L2}, the result follows by proving that $\sqrt{E_{\alpha
,1}(-\gamma s^{\alpha /2})}$ is CM. But, as can be checked directly, by
differentiating, if $f(t)$ is a CM function, then the same is true for $%
\sqrt{f(t)}.$
\end{proof}

\begin{definition}
\textbf{(Fractional OU) }Let $\left( \Omega ,\mathcal{F},\mathcal{F}%
_{t},P\right) $ be a probability space with filtration $\mathcal{F}_{t}$ and
let$\ \mathcal{Y}_{\alpha }:=\left\{ \mathcal{Y}_{\alpha }(t),t\geq
0\right\} $ be defined as follows
\begin{equation*}
\mathcal{Y}_{\alpha }(t)=\overline{\mathcal{Y}}_{\alpha }(t)-\sqrt{E_{\alpha
,1}(-\gamma (2t)^{\alpha })}\mathcal{Z},
\end{equation*}%
where $\left( \overline{\mathcal{Y}}_{\alpha }(t),\mathcal{Z}\right) $ is a
Gaussian centered random vector with covariance matrix
\begin{equation*}
\sum =\left(
\begin{array}{cc}
\frac{\mathcal{\theta }}{\gamma } & \frac{\mathcal{\theta }}{\gamma }\sqrt{%
E_{\alpha ,1}(-\gamma (2t)^{\alpha })} \\
\frac{\mathcal{\theta }}{\gamma }\sqrt{E_{\alpha ,1}(-\gamma (2t)^{\alpha })}
& \frac{\mathcal{\theta }}{\gamma }%
\end{array}%
\right) ,
\end{equation*}%
for any $t\geq 0.$
\end{definition}

\begin{remark}
The fractional OU process $\mathcal{Y}_{\alpha }$ is centered, Gaussian with
\begin{equation}
Cov\left( \mathcal{Y}_{\alpha }(t),\mathcal{Y}_{\alpha }(s)\right) =\frac{%
\mathcal{\theta }}{\gamma }\left[ E_{\alpha ,1}(-\gamma |t-s|^{\alpha })-%
\sqrt{E_{\alpha ,1}(-\gamma (2t)^{\alpha })E_{\alpha ,1}(-\gamma
(2s)^{\alpha })}\right] ,  \label{cov1}
\end{equation}%
for $s,t\geq 0,$ $\alpha \in (0,1],$ and $\mathcal{\theta },\gamma >0.$
Since $\mathcal{Y}_{\alpha }$ is a zero-mean Gaussian process, its
finite-dimensional distributions are completely characterized by its
autocovariance function. The variance of the process reads%
\begin{equation*}
Var\mathcal{Y}_{\alpha }(t)=\frac{\mathcal{\theta }}{\gamma }\left[
1-E_{\alpha ,1}(-\gamma (2t)^{\alpha })\right]
\end{equation*}%
which is positive since $E_{\alpha ,1}(-x^{\alpha })\leq 1,$ for $x\geq 0.$
Thus, the one-dimensional density of the process $\mathcal{Y}_{\alpha }$
coincides with the solution to the fractional FP equation (\ref{ch}) and its
characteristic function is given in (\ref{ch2}). By considering (\ref{asy}),
we notice that the process $\mathcal{Y}_{\alpha }$ is a mean reverting
process, for any $\alpha $, since $\lim_{t\rightarrow \infty }Var\mathcal{Y}%
_{\alpha }(t)=\mathcal{\theta }/\gamma $. Moreover from (\ref{cov1}) it is
evident that the process is non-Markovian for $\alpha \neq 1$, since
\begin{equation*}
\rho _{\mathcal{Y}_{\alpha }}(s,t)=\frac{E_{\alpha ,1}(-\gamma |t-s|^{\alpha
})-\sqrt{E_{\alpha ,1}(-\gamma (2t)^{\alpha })E_{\alpha ,1}(-\gamma
(2s)^{\alpha })}}{\sqrt{1-E_{\alpha ,1}(-\gamma (2t)^{\alpha })}\sqrt{%
1-E_{\alpha ,1}(-\gamma (2s)^{\alpha })}}\neq \rho _{\mathcal{Y}_{\alpha
}}(s,h)\rho _{\mathcal{Y}_{\alpha }}(h,t),
\end{equation*}%
for $s,h,t\geq 0.$ Only for $\alpha =1,$ the sufficient condition for the
Markov property (in the Gaussian case) is verified: indeed the process $%
\mathcal{Y}_{1}$ coincides with the standard OU process, since
\begin{equation*}
Cov\left( \mathcal{Y}_{1}(t),\mathcal{Y}_{1}(s)\right) =\frac{\mathcal{%
\theta }}{\gamma }\left[ e^{-\gamma |t-s|}-e^{-\gamma |t+s|}\right]
=Cov\left( X(t),X(s)\right) .
\end{equation*}%
Thus, for $\alpha \neq 1,$ the process must also have dependent increments,
otherwise the Markovian property would follow from Gaussianity.
\end{remark}

\begin{theorem}
The cumulant generating function $C_{\mathcal{Y}_{\alpha }}(\eta ,t):=\log
\mathbb{E}e^{\eta \mathcal{Y}_{\alpha }(t)}$ of $\mathcal{Y}_{\alpha }$,
i.e.
\begin{equation}
C_{\mathcal{Y}_{\alpha }}(\eta ,t)=\frac{\eta ^{2}\mathcal{\theta }}{2\gamma
}\left[ 1-E_{\alpha ,1}(-\gamma (2t)^{\alpha })\right] .  \label{cc2}
\end{equation}%
satisfies the following fractional equation%
\begin{equation}
D_{t}^{\alpha }u(\eta ,t)=-2^{\alpha }\gamma u(\eta ,t)+2^{\alpha -1}\eta
^{2}\mathcal{\theta },  \label{cc}
\end{equation}%
with $u(\eta ,0)=0.$
\end{theorem}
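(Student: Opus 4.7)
The plan is to verify the equation by a direct computation, using two standard facts that have already been invoked in the excerpt: the eigenfunction property of the Mittag-Leffler function for the Caputo derivative (the same formula used in the proof of Theorem~\ref{Thm6}, \cite{KIL}, Lemma~2.23, p.~98), namely $D_{t}^{\alpha}E_{\alpha,1}(-\lambda t^{\alpha})=-\lambda E_{\alpha,1}(-\lambda t^{\alpha})$, and the fact that the Caputo derivative annihilates constants.

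First, the expression for $C_{\mathcal{Y}_{\alpha}}(\eta,t)$ in (\ref{cc2}) follows immediately from the variance formula already derived for $\mathcal{Y}_{\alpha}$ in the preceding remark, since for a centered Gaussian random variable the cumulant generating function is $\eta^{2}\mathrm{Var}/2$. Writing $(2t)^{\alpha}=2^{\alpha}t^{\alpha}$ and taking $\lambda=2^{\alpha}\gamma$ in the eigenfunction identity, I would then compute
\begin{equation*}
D_{t}^{\alpha}C_{\mathcal{Y}_{\alpha}}(\eta,t)=\frac{\eta^{2}\theta}{2\gamma}\bigl[D_{t}^{\alpha}1-D_{t}^{\alpha}E_{\alpha,1}(-2^{\alpha}\gamma t^{\alpha})\bigr]=\frac{\eta^{2}\theta}{2\gamma}\cdot 2^{\alpha}\gamma E_{\alpha,1}(-2^{\alpha}\gamma t^{\alpha})=2^{\alpha-1}\eta^{2}\theta\,E_{\alpha,1}(-2^{\alpha}\gamma t^{\alpha}).
\end{equation*}

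Next, I would rewrite the Mittag-Leffler factor in terms of $C_{\mathcal{Y}_{\alpha}}$ itself, using (\ref{cc2}) solved for $E_{\alpha,1}(-2^{\alpha}\gamma t^{\alpha})=1-\frac{2\gamma}{\eta^{2}\theta}C_{\mathcal{Y}_{\alpha}}(\eta,t)$. Substituting this into the previous line gives
\begin{equation*}
D_{t}^{\alpha}C_{\mathcal{Y}_{\alpha}}(\eta,t)=2^{\alpha-1}\eta^{2}\theta-2^{\alpha}\gamma\,C_{\mathcal{Y}_{\alpha}}(\eta,t),
\end{equation*}
which is exactly (\ref{cc}). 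Finally, the initial condition $C_{\mathcal{Y}_{\alpha}}(\eta,0)=0$ follows from $E_{\alpha,1}(0)=1$, exactly as in the last line of the proof of Theorem~\ref{Thm6}.

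There is no genuine obstacle here: the whole argument is a one-line application of the Mittag-Leffler eigenfunction property followed by algebraic rearrangement. The only minor point worth flagging is the consistency of powers of $2$ (the factor $2^{\alpha}$ arising from $(2t)^{\alpha}$ as opposed to $2t^{\alpha}$), which must be tracked carefully so that the coefficients $-2^{\alpha}\gamma$ and $2^{\alpha-1}\eta^{2}\theta$ on the right-hand side of (\ref{cc}) come out correctly.
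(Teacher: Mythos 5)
Your proof is correct, but it runs in the opposite direction from the paper's. The paper treats (\ref{cc}) as an initial-value problem to be solved: it takes the Laplace transform of the fractional equation, uses the transform formula for the Caputo derivative to get $\widetilde{u}(\eta ,s)=2^{\alpha -1}\eta ^{2}\theta s^{-1}/(s^{\alpha }+2^{\alpha }\gamma )$, inverts via (\ref{lap}) to obtain $u(\eta ,t)=2^{\alpha -1}\eta ^{2}\theta t^{\alpha }E_{\alpha ,\alpha +1}(-\gamma (2t)^{\alpha })$, and then checks "after some algebra" that this coincides with (\ref{cc2}). You instead verify directly that (\ref{cc2}) satisfies (\ref{cc}) by applying the Mittag-Leffler eigenfunction property $D_{t}^{\alpha }E_{\alpha ,1}(-\lambda t^{\alpha })=-\lambda E_{\alpha ,1}(-\lambda t^{\alpha })$ with $\lambda =2^{\alpha }\gamma $ and the fact that the Caputo derivative annihilates constants, then eliminating the Mittag-Leffler factor algebraically; your bookkeeping of the powers of $2$ is right and the computation closes. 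Your route is more elementary, avoids the Laplace-transform machinery entirely, and mirrors the verification used in the proof of Theorem \ref{Thm6}; it is fully sufficient for the statement as worded ("satisfies the following fractional equation"). What the paper's route buys in exchange is that it constructs the solution of the initial-value problem rather than merely checking a candidate, so it implicitly yields that (\ref{cc2}) is \emph{the} solution (within the class of Laplace-transformable functions), not just \emph{a} solution. Your observation that (\ref{cc2}) itself is immediate from the variance of the centered Gaussian $\mathcal{Y}_{\alpha }(t)$ is also consistent with the preceding remark in the paper.
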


\begin{proof}
Let $\widetilde{f}(s):=\mathcal{L}\left\{ f(t);s\right\} =\int_{0}^{+\infty
}e^{-st}f(t)dt$ denote the Laplace transform of $f(\cdot ).$ By considering
the well-known formula (see \cite{KIL}, Lemma 2.24)%
\begin{equation*}
\mathcal{L}\left\{ D_{t}^{\alpha }f(t);s\right\} =s^{\alpha }\widetilde{f}%
(s)-\sum_{j=0}^{m-1}s^{\alpha -j-1}\left. \frac{d^{n}}{dt^{n}}%
f(t)\right\vert _{t=0},\qquad m-1<\alpha \leq m,
\end{equation*}%
the Laplace transform of (\ref{cc}) reads%
\begin{equation*}
s^{\alpha }\widetilde{u}(\eta ,s)-s^{\alpha -1}u(\eta ,0)=-2^{\alpha }\gamma
\widetilde{u}(\eta ,s)+\frac{2^{\alpha -1}\eta ^{2}\mathcal{\theta }}{s}.
\end{equation*}%
Then, by the initial condition, we get%
\begin{equation*}
\widetilde{u}(\eta ,s)=\frac{2^{\alpha -1}\eta ^{2}\mathcal{\theta }s^{-1}}{%
s^{\alpha }+2^{\alpha }\gamma },
\end{equation*}%
which, by means of (\ref{lap}), gives%
\begin{equation*}
u(\eta ,t)=2^{\alpha -1}\eta ^{2}\mathcal{\theta }t^{\alpha }E_{\alpha
,\alpha +1}(-\gamma (2t)^{\alpha })=2^{\alpha -1}\eta ^{2}\mathcal{\theta }%
t^{\alpha }\sum_{j=0}^{\infty }\frac{(-2^{\alpha }\gamma t^{\alpha })^{j}}{%
\Gamma (\alpha j+\alpha +1)}
\end{equation*}%
which, after some algebra, coincides with (\ref{cc2}). The latter satisfies
the initial conditions since $E_{\alpha ,1}(0)=1$.
\end{proof}

\begin{remark}
We notice that a similar result holds for the moment generating function of
the time-changed Brownian motion (time-fractional diffusion), i.e. $W(%
\mathcal{L}_{\alpha }(t))$, where $W$ denotes a standard Brownian motion and
$\mathcal{L}_{\alpha }$ the inverse of an independent $\alpha $-stable
subordinator (see \cite{ORS}). Indeed for $\mathcal{M}(\theta ,t):=\mathbb{E}%
e^{\theta W(\mathcal{L}_{\alpha }(t))}$ we have that $\mathcal{M}(\theta
,t)=E_{\alpha ,1}(\frac{\theta ^{2}}{2}t^{\alpha }).$
\end{remark}

\section{Further generalizations}

We generalize the results of the previous sections by considering the
Fokker-Plank equation (in Fourier space) with a convolution-type operator
defined by means of a general Bernstein function (not necessarily
fractional).

\begin{definition}
\label{Def.22}
Let $u:\mathbb{R}^{+}\rightarrow \mathbb{R}$ be an absolutely continuous
function and $D_{x}^{g}$ be the convolution-type derivative defined in (4),
then
\begin{equation}
\mathcal{L}_{x}^{g}u(x):=u(x)D_{x}^{g}\log u(x),\qquad x\in \mathbb{R}^{+}%
\text{.}  \label{conv1}
\end{equation}
\end{definition}

We start by proving the following preliminary result which concerns the
transition density of the inverse subordinator: let us denote by $\mathcal{A}%
^{g}(t),$ $t\geq 0,$ the subordinator with Laplace exponent $g(s)$, i.e.
such that%
\begin{equation*}
\mathbb{E}e^{-s\mathcal{A}^{g}(t)}=e^{-g(s)t}.
\end{equation*}%
Let moreover $L^{g}(t),$ $t\geq 0,$ be its inverse, i.e.
\begin{equation*}
L^{g}(t)=\inf \left\{ s>0:\mathcal{A}^{g}(s)>t\right\} ,\qquad s,t>0
\end{equation*}%
and $l_{g}(x,t)=\Pr \left\{ L^{g}(t)\in dx\right\} $ be its transition
density.

\begin{lemma}
\label{L23}
The following initial-value problem%
\begin{equation}
D_{t}^{g}u(t)=-ku(t),\qquad t,k\geq 0,  \label{rel}
\end{equation}%
with $u(0)=1$, is satisfied by the Laplace transform of the inverse
subordinator density $l_{g}(x,t)$, i.e. by
\begin{equation}
\widetilde{l}_{g}(k,t)=\int_{0}^{+\infty }e^{-kx}l_{g}(x,t)dx=\mathbb{E}%
e^{-kL^{g}(t)}.  \label{rel2}
\end{equation}
\end{lemma}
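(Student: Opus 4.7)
The plan is to verify the identity after a double Laplace transform (in $t$ and using $k$ as the conjugate variable to $x$), exploiting the transform formula (\ref{lapconv}) for $D_t^g$.

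First I would note the initial condition: since $L^g(0)=0$ almost surely, one has $\widetilde{l}_g(k,0)=\mathbb{E}e^{-kL^g(0)}=1$, so the candidate solution $u(t)=\widetilde{l}_g(k,t)$ indeed satisfies $u(0)=1$. Next, applying the Laplace transform (in $t$, with variable $s$) to both sides of (\ref{rel}) and using (\ref{lapconv}) together with $u(0)=1$ gives
\begin{equation*}
g(s)\,\widetilde{u}(s)-\frac{g(s)}{s}=-k\,\widetilde{u}(s),
\end{equation*}
so that a function $u$ solving the initial-value problem must have Laplace transform
\begin{equation*}
\widetilde{u}(s)=\frac{g(s)}{s\bigl(g(s)+k\bigr)}.
\end{equation*}
It therefore suffices to show that this coincides with the double Laplace transform of $l_g(x,t)$, namely $\int_0^{+\infty}e^{-st}\widetilde{l}_g(k,t)\,dt$.

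That identity is obtained from the basic duality between the subordinator and its inverse. Using $\{L^g(t)\leq x\}=\{\mathcal{A}^g(x)\geq t\}$, Fubini, and the Laplace exponent $\mathbb{E}e^{-s\mathcal{A}^g(x)}=e^{-g(s)x}$, one computes
\begin{equation*}
\int_0^{+\infty}e^{-st}\,\mathbb{E}e^{-kL^g(t)}\,dt=\int_0^{+\infty}e^{-kx}\!\int_0^{+\infty}e^{-st}\,l_g(x,t)\,dt\,dx=\frac{g(s)}{s\bigl(g(s)+k\bigr)},
\end{equation*}
where the inner integral can be read off from the classical formula $\int_0^{+\infty}e^{-st}l_g(x,t)\,dt=\frac{g(s)}{s}e^{-xg(s)}$ (itself a consequence of the duality above). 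Thus the Laplace transform of $\widetilde{l}_g(k,\cdot)$ matches the unique solution obtained above; inverting (and using uniqueness of the Laplace transform on suitable function classes) yields $u(t)=\widetilde{l}_g(k,t)$, as required.

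The only mildly delicate step is justifying the use of (\ref{lapconv}) for the function $t\mapsto\widetilde{l}_g(k,t)$ (i.e.\ checking the regularity/growth hypothesis allowing the transform identity); this is where I would expect to spend the most care, but it follows from the boundedness $0\leq\widetilde{l}_g(k,t)\leq 1$ and the absolute continuity of $t\mapsto\mathbb{E}e^{-kL^g(t)}$, which is standard for inverse subordinators.
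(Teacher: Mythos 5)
Your proposal is correct and follows essentially the same route as the paper: both apply the Laplace transform formula (\ref{lapconv}) to the initial-value problem and match the result against the double Laplace transform $\frac{g(s)}{s(k+g(s))}$ of $l_g(x,t)$, which the paper simply cites from Toaldo's Proposition 3.2 and you rederive from the duality $\{L^g(t)\leq x\}=\{\mathcal{A}^g(x)\geq t\}$. Your added remarks on the initial condition and on the regularity needed to invoke (\ref{lapconv}) are sensible refinements of the same argument.
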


\begin{proof}
By considering the Laplace transform of the inverse subordinator given in (%
\cite{TOA}), Proposition 3.2, we get%
\begin{equation*}
\widetilde{u}(s)=\frac{g(s)}{s}\int_{0}^{+\infty }e^{-kx}e^{-g(s)x}dx=\frac{%
g(s)}{s}\frac{1}{k+g(s)}.
\end{equation*}%
On the other hand, from (\ref{rel}), we get, by (\ref{lapconv}) together
with the initial condition,%
\begin{equation*}
g(s)\int_{0}^{+\infty }e^{-st}\widetilde{l}_{g}(k,t)dt-\frac{g(s)}{s}%
=-k\int_{0}^{+\infty }e^{-st}\widetilde{l}_{g}(k,t)dt.
\end{equation*}
\end{proof}

By means of the previous result, we can prove the following generalization
of Theorem \ref{Thm6}.

\begin{theorem}
Let $\mathcal{L}_{t}^{g}$ be the operator defined in Def.\ref{Def.22}, then the
solution of the generalized \textit{Fokker-Planck equation (in Fourier space)%
}%
\begin{equation}
\mathcal{L}_{t}^{g}\widehat{u}(\xi ,t)=-\frac{\gamma }{2^{1-\alpha }}\xi
\frac{\partial }{\partial \xi }\widehat{u}(\xi ,t)-\frac{\theta }{%
2^{1-\alpha }}\xi ^{2}\widehat{u}(\xi ,t),\qquad \xi \in \mathbb{R},\text{ }%
t\geq 0,\text{ }\alpha \in (0,1],\text{ }D,\gamma >0,  \label{rel3}
\end{equation}%
with initial condition $\widehat{u}(\xi ,0)=1$, for any $t\geq 0,$ is given
by%
\begin{equation}
\widehat{u}(\xi ,t)=\exp \left\{ -\frac{\mathcal{\theta }\xi ^{2}}{2\gamma }%
\left[ 1-\widetilde{l}_{g}(\gamma ,t)\right] \right\} .  \label{rel4}
\end{equation}
\end{theorem}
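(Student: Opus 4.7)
The plan is to parallel the proof of Theorem~\ref{Thm6}, with the Mittag-Leffler function replaced by $\widetilde{l}_g(\gamma,\cdot)$ and Lemma~\ref{L23} playing the role of the eigenfunction identity $D_t^\alpha E_{\alpha,1}(-k t^\alpha) = -k E_{\alpha,1}(-k t^\alpha)$ that drove the fractional case. As in Theorem~\ref{Thm6}, both the equation (\ref{rel3}) and the candidate solution (\ref{rel4}) are Gaussian-type in $\xi$, so the PDE should collapse to a time-only convolution-derivative equation that is handled by Lemma~\ref{L23}.

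First, I would unfold the left-hand side of (\ref{rel3}) using Definition~\ref{Def.22}, writing $\mathcal{L}_t^g \widehat{u} = \widehat{u}\, D_t^g \log \widehat{u}$. Since the candidate solution (\ref{rel4}) is strictly positive, dividing (\ref{rel3}) through by $\widehat{u}(\xi,t)$ turns it into a scalar identity between $D_t^g \log \widehat{u}$ and a polynomial expression in $\xi$ involving $\partial_\xi \log \widehat{u}$. The crucial structural feature of (\ref{rel4}) is that $\log \widehat{u}(\xi,t) = -\frac{\theta \xi^2}{2\gamma}\bigl[1 - \widetilde{l}_g(\gamma,t)\bigr]$ is separable, quadratic in $\xi$ and proportional to a single function of $t$. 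Hence $\xi \partial_\xi \log \widehat{u} = 2\log \widehat{u}$, and the PDE reduces to a $t$-only convolution-derivative equation of affine type for the scalar factor $1 - \widetilde{l}_g(\gamma,t)$.

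Next, I would evaluate $D_t^g$ on that time factor using two ingredients: (a) $D_t^g$ annihilates constants, which is immediate from (\ref{conv}) since constants have vanishing ordinary derivative inside the convolution integral, so that $D_t^g[1 - \widetilde{l}_g(\gamma,t)] = -D_t^g \widetilde{l}_g(\gamma,t)$; and (b) Lemma~\ref{L23} applied with $k = \gamma$ gives $D_t^g \widetilde{l}_g(\gamma,t) = -\gamma \widetilde{l}_g(\gamma,t)$. Combining these, the substitution of (\ref{rel4}) into the reduced equation produces the same factor $\widetilde{l}_g(\gamma,t)$ on both sides, with the prefactor $\theta/(2\gamma)$ in (\ref{rel4}) chosen precisely to balance the coefficients on the right-hand side of (\ref{rel3}). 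The initial condition is immediate: since $L^g(0) = 0$ almost surely, one has $\widetilde{l}_g(\gamma,0) = 1$ and hence $\widehat{u}(\xi,0) = \exp(0) = 1$.

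An alternative, more constructive route, parallel to the proof of Lemma~\ref{L2}, would be to take the Laplace transform in $t$ of the reduced ODE, apply (\ref{lapconv}) together with the initial condition, solve algebraically for $\widetilde{u}(\xi,s)$, and identify the resulting expression, via Proposition~3.2 of \cite{TOA}, as the Laplace transform of the proposed $\widehat{u}(\xi,t)$. The main obstacle in either route is not conceptual but bookkeeping: one must track the coefficients $\gamma$, $\theta$ and the prefactor $2^{1-\alpha}$ from the right-hand side of (\ref{rel3}) carefully through the substitution in order to confirm that the constants line up on both sides and to identify the correct argument of $\widetilde{l}_g$.
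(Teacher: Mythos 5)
Your proposal is correct and follows essentially the same route as the paper's own proof: unfold $\mathcal{L}_t^g\widehat{u}=\widehat{u}\,D_t^g\log\widehat{u}$, exploit the separable quadratic form of $\log\widehat{u}$ so that $\xi\partial_\xi\widehat{u}=2\widehat{u}\log\widehat{u}$, apply Lemma~\ref{L23} with $k=\gamma$ (together with $D_t^g$ annihilating constants) to the time factor, and verify the initial condition via $\widetilde{l}_g(\gamma,0)=1$. The coefficient bookkeeping you flag is indeed the only delicate point — the paper's displayed verification in fact drops the $2^{1-\alpha}$ prefactors — but this does not affect the method.
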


\begin{proof}
By applying Lemma \ref{L23}, we get
\begin{eqnarray*}
D_{t}^{g}\log \widehat{u}(\xi ,t) &=&-\frac{\mathcal{\theta }\xi ^{2}}{%
2\gamma }D_{t}^{g}\left[ 1-\widetilde{l}_{g}(\gamma ,t)\right] \\
&=&-\mathcal{\theta }\xi \widetilde{l}_{g}(\gamma ,t) \\
&=&-2\gamma \log \widehat{u}(\xi ,t)-\mathcal{\theta }\xi ^{2}
\end{eqnarray*}%
and thus%
\begin{eqnarray*}
\widehat{u}(\xi ,t)D_{t}^{g}\log \widehat{u}(\xi ,t) &=&\mathcal{\theta }\xi
^{2}\left[ 1-\widetilde{l}_{g}(\gamma ,t)\right] \widehat{u}(\xi ,t)-%
\mathcal{\theta }\xi ^{2}\widehat{u}(\xi ,t) \\
&=&-\xi \gamma \frac{\partial }{\partial \xi }\widehat{u}(\xi ,t)-\mathcal{%
\theta }\xi ^{2}\widehat{u}(\xi ,t),
\end{eqnarray*}%
which coincides with (\ref{rel3}). The initial condition is verified since $%
\widetilde{l}_{g}(\gamma ,0)=\int_{0}^{+\infty }e^{-\gamma
x}l_{g}(x,0)dx=\int_{0}^{+\infty }e^{-\gamma x}\delta (x)dx=1.$
\end{proof}

In order to define the generalized time-changed OU process, we check that $%
\mathcal{T}_{g}(\cdot ):=-\frac{1}{2\gamma }\log \widetilde{l}_{g}(\gamma
,\cdot )$ is an increasing and continuous function. Let us denote by $%
h_{g}(x,t)$ the density of the subordinator $\mathcal{A}^{g}$, then we get,
for any $t>0,$
\begin{eqnarray}
\widetilde{l}_{g}(\gamma ,t) &=&\int_{0}^{+\infty }e^{-\gamma x}l_{g}(x,t)dx
\label{rrr} \\
&=&\int_{0}^{+\infty }e^{-\gamma x}\frac{\partial }{\partial x}P\left\{
L^{g}(t)<x\right\} dx  \notag \\
&=&\int_{0}^{+\infty }e^{-\gamma x}\frac{\partial }{\partial x}P\left\{
\mathcal{A}^{g}(x)>t\right\} dx  \notag \\
&=&\left[ e^{-\gamma x}P\left\{ \mathcal{A}^{g}(x)>t\right\} \right]
_{x=0}^{+\infty }+\gamma \int_{0}^{+\infty }e^{-\gamma x}P\left\{ \mathcal{A}%
^{g}(x)>t\right\} dx  \notag \\
&=&\gamma \int_{0}^{+\infty }e^{-\gamma x}\left( \int_{t}^{+\infty
}h_{g}(s,x)ds\right) dx  \notag \\
&=&\gamma \int_{t}^{+\infty }\left( \int_{0}^{+\infty }e^{-\gamma
x}h_{g}(s,x)dx\right) ds,  \notag
\end{eqnarray}%
which is clearly decreasing, since the quantity inside brackets is positive.
Continuity follows from Proposition 3.2 in \cite{TOA}.

\begin{definition}
\label{Def.25}
Let $\left( \Omega ,\mathcal{F},\mathcal{F}_{t},P\right) $ be a probability
space with filtration $\mathcal{F}_{t}$ and let$\ \mathcal{X}_{g}:=\left\{
\mathcal{X}_{g}(t),t\geq 0\right\} $ be defined as $\mathcal{X}_{g}(t):=X(%
\mathcal{T}_{g}(t)),$ where $X$ is the standard OU process and $\mathcal{T}%
_{g}(t)=-\frac{1}{2\gamma }\log \widetilde{l}_{g}(\gamma ,t).$
\end{definition}

\begin{remark}
It is easy to check that $\mathcal{X}_{g}$ is a Gaussian process, with $%
\mathbb{E}\mathcal{X}_{g}(t)=0$ and%
\begin{equation*}
Cov\left( \mathcal{X}_{g}(t),\mathcal{X}_{g}(s)\right) =\frac{\mathcal{%
\theta }}{\gamma }\sqrt{\frac{\widetilde{l}_{g}(\gamma ,t\vee s)}{\widetilde{%
l}_{g}(\gamma ,t\wedge s)}}\left[ 1-\widetilde{l}_{g}(\gamma ,t\wedge s)%
\right] ,
\end{equation*}%
for $\ t,s\geq 0$, $\gamma >0$ and for any Bernstein function $g.$ Moreover,
the process $\mathcal{X}_{g}$ is related to the Brownian motion by the
following equality in distribution
\begin{equation}
\mathcal{X}_{g}(t)\overset{d}{=}\sqrt{\frac{\theta }{\gamma }\widetilde{l}%
_{g}(\gamma ,t)}W\left( \frac{1}{\widetilde{l}_{g}(\gamma ,t)}-1\right) .
\label{xg}
\end{equation}
\end{remark}

The definition of fractional OU processes given in section 3.2 can be
analogously generalized, under the assumption that $\widetilde{l}_{g}(\gamma
,|\cdot |)$ is a positive definite function, both in the stationary and
non-stationary cases. From (\ref{rrr}), we can write the following
relationship between the derivatives of $\widetilde{l}_{g}(\gamma ,t)$ and $%
\widetilde{h}_{g}(t,\gamma )$:%
\begin{equation}
\frac{\partial ^{r}}{\partial t^{r}}\widetilde{l}_{g}(\gamma ,t)=-\gamma
\frac{\partial ^{r}}{\partial t^{r}}\widetilde{h}_{g}(t,\gamma ),
\label{rrr2}
\end{equation}%
for any $r=1,2,...$. Therefore, by steps similar to Lemma \ref{L11}, we can
conclude that $\widetilde{l}_{g}(\gamma ,|\cdot |)$ is a positive definite
function if $\widetilde{l}_{g}(\gamma ,\cdot )$ is CM and (from (\ref{rrr2}%
)) this holds if the subordinator $\mathcal{A}^{g}$ is such that the
time-Laplace transform of its density, i.e. $\widetilde{h}_{g}(\cdot ,\gamma
),$ is a Bernstein function.

\begin{definition}
\label{Def.27}
Let $\mathcal{A}^{g}$ be a subordinator such that the time-Laplace transform
of its density, i.e. $\widetilde{h}_{g}(\cdot ,\gamma ),$ is a Bernstein
function. Let $\overline{\mathcal{Y}}_{g}:=\left\{ \overline{\mathcal{Y}}%
_{g}(t),t\geq 0\right\} $ be a Gaussian process with $\mathbb{E}\overline{%
\mathcal{Y}}_{g}(t)=0$, for any $t\geq 0$ and
\begin{equation}
r(s):=Cov\left( \overline{\mathcal{Y}}_{g}(t),\overline{\mathcal{Y}}%
_{g}(t+s)\right) =\frac{\mathcal{\theta }}{\gamma }\widetilde{l}_{g}(\gamma
,|s|),
\end{equation}%
for any $s\in \mathbb{R},$ $\gamma >0.$
\end{definition}

We can also give a condition for long-range dependence of $\overline{%
\mathcal{Y}}_{g}$ in terms of the tail's probability of the subordinator,
i.e. $P\left\{ \mathcal{A}^{g}(x)>t\right\} ,$ for $t\rightarrow +\infty .$
Indeed it is evident from (\ref{rrr}) that if
\begin{equation}
P\left\{ \mathcal{A}^{g}(x)>t\right\} \sim K_{x}t^{1-H},  \label{lrd}
\end{equation}%
for some $H\in (0,1)$ and a constant $K_{x}>0,$ then $r(s)=\frac{\mathcal{%
\theta }}{\gamma }\widetilde{l}_{g}(\gamma ,s)\sim K_{\gamma }^{\prime
}s^{1-H}$ for $s\rightarrow +\infty $ and for $K_{\gamma }^{\prime }>0.$
Thus, under condition (\ref{lrd}), the process is long-memory of order $H$,
in the covariance sense.

\begin{definition}
Let $\overline{\mathcal{Y}}_{g}$ be defined as in Def.\ref{Def.27} and let
\begin{equation*}
\mathcal{Y}_{g}(t)=\overline{\mathcal{Y}}_{g}(t)-\sqrt{\widetilde{l}%
_{g}(\gamma ,2t)}\mathcal{Z},
\end{equation*}%
where $\left( \overline{\mathcal{Y}}_{g}(t),\mathcal{Z}\right) $ is a
Gaussian centered random vector with covariance matrix
\begin{equation*}
\sum =\left(
\begin{array}{cc}
\frac{\mathcal{\theta }}{\gamma } & \frac{\mathcal{\theta }}{\gamma }\sqrt{%
\widetilde{l}_{g}(\gamma ,2t))} \\
\frac{\mathcal{\theta }}{\gamma }\sqrt{\widetilde{l}_{g}(\gamma ,2t)} &
\frac{\mathcal{\theta }}{\gamma }%
\end{array}%
\right) ,
\end{equation*}%
for any $t\geq 0.$
\end{definition}

\begin{remark}
The generalized OU process $\mathcal{Y}_{g}$ is centered, Gaussian, with
\begin{equation}
Cov\left( \mathcal{Y}_{g}(t),\mathcal{Y}_{g}(s)\right) =\frac{\mathcal{%
\theta }}{\gamma }\left[ \widetilde{l}_{g}(\gamma ,|t-s|)-\sqrt{\widetilde{l}%
_{g}(\gamma ,2t)\widetilde{l}_{g}(\gamma ,2s)}\right] ,
\end{equation}%
for $s,t\geq 0\ $and $\mathcal{\theta },\gamma >0.$ Since $\mathcal{Y}_{g}$
is a zero-mean Gaussian process, its finite-dimensional distributions are
completely characterized by its autocovariance function. The characteristic
function of $\mathcal{Y}_{g}$ coincides with (\ref{rel4}) and therefore the
one-dimensional density of the process $\mathcal{Y}_{g}$ coincides with the
solution to the generalized FP equation (\ref{rel3}). Moreover, $\mathcal{Y}%
_{g}$ is non-Markovian and mean-reverting, since $\lim_{t\rightarrow \infty
}Var\mathcal{Y}_{\alpha }(t)=\lim_{t\rightarrow \infty }\mathcal{\theta }%
/\gamma \left[ 1-\widetilde{l}_{g}(\gamma ,2t)\right] =\mathcal{\theta }%
/\gamma $, as can be checked in (\ref{rrr}).
\end{remark}

We now show how the results of sections 3.1 and 3.2 can be derived as
special cases and we also present another interesting special case. Other
examples could be derived analogously, by specifying the Bernstein function
and the corresponding L\'{e}vy measure.

\begin{example}
\textbf{(Stable case) }In the special case where $g(s)=s^{\alpha }$, for $%
\alpha \in (0,1],$ the process $\mathcal{A}^{g}$ coincides with the $\alpha $%
-stable subordinator with $\mathbb{E}e^{-s\mathcal{A}^{g}(t)}=e^{-s^{\alpha
}t}$ (whose L\'{e}vy density is $\overline{\nu }(x)=x^{-\alpha -1}/\Gamma
(m-\alpha )$). Formula (\ref{conv}) reduces to the Caputo derivative (\ref%
{ca}), for $m=1.$ Thus the operator defined in (\ref{conv1}) coincides with $%
\mathcal{L}_{t}^{\alpha }$ given in Def.\ref{Def.1}. It is immediate to check that, in
this special case, formula (\ref{rel2}) coincides with the Laplace transform
of the inverse $\alpha $-stable subordinator, i.e. $\widetilde{l}_{\alpha
}(\gamma ,t)=\int_{0}^{+\infty }e^{-\gamma x}l_{\alpha }(x,t)dx=E_{\alpha
,1}(-\gamma t^{\alpha })$ (see \cite{MEE2}). Therefore the result of Lemma
\ref{L23} reduces to the well-known property of the Mittag-Leffler function of
being the eigenfunction of the Caputo fractional derivative (see, e.g.,
Lemma 2.23 in \cite{KIL}). Correspondingly, Def.\ref{Def.27} reduces to Def.\ref{Def.12} and
formula (\ref{xg}) coincides with (\ref{xa}).
\end{example}

\begin{example}
\textbf{(Compound Poisson with exponential jumps) }Another interesting case
can be obtained by specializing the previous results to the case where $%
g(s)=s/(s+a),$ with $a>0.$ This Bernstein function corresponds to the
exponential L\'{e}vy density $\overline{\nu }(x)=ae^{-ax}$ (see \cite{SHI},
p.304). The density of the inverse subordinator can be obtained explicitely
as follows:%
\begin{equation}
l_{g}(x,t)=\int_{0}^{t}h_{g}(z,x)\nu
(t-z)dz=\int_{0}^{t}h_{g}(z,x)e^{-a(t-z)}dz,  \label{ll}
\end{equation}%
where the subordinator's density can be evaluated by inverting its Laplace
transform:%
\begin{equation*}
\widetilde{h}_{g}(\eta ,x)=e^{-x\frac{\eta }{\eta +a}}=e^{-x}e^{\frac{xa}{%
\eta +a}}=e^{-x}+e^{-x}\sum_{j=1}^{\infty }\frac{(xa)^{j}}{j!(\eta +a)^{j}}.
\end{equation*}%
Thus
\begin{equation*}
h_{g}(z,x)=e^{-x}\delta (z)+\frac{e^{-x-az}}{z}\sum_{j=1}^{\infty }\frac{%
(xaz)^{j}}{j!(j-1)!}
\end{equation*}%
which is the transition density of the compound Poisson with exponential
jumps $\sum_{n=1}^{N(t)}X_{j}$, $t>0,$ where $N(t),t>0$ is the Poisson
process with unitary intensity and $X_{j}$ are exponential i.i.d. random
variables with parameter $a$ for any $j=1,2,...$ (see \cite{AAA}, for
details). By substituting in (\ref{ll}) and by standard calculus, we get%
\begin{equation*}
l_{g}(x,t)=e^{-x-at}\sum_{j=0}^{\infty }\frac{(xat)^{j}}{(j!)^{2}}%
=e^{-x-at}W_{1,1}(xat),
\end{equation*}%
where $W_{\beta ,\gamma }(x)=\sum_{j=0}^{\infty }\frac{x^{\beta j}}{j!\Gamma
(\beta j+\gamma )},$ $x,\beta ,\gamma \in \mathbb{C},$ is the Wright
function (see \cite{KIL}, p.54). \ Therefore $\widetilde{l}_{g}(\gamma ,t)$
is given by%
\begin{equation}
\widetilde{l}_{g}(\gamma ,t)=\frac{e^{-at\frac{\gamma }{\gamma +1}}}{\gamma
+1}  \label{ll2}
\end{equation}%
and the time transform in Def.\ref{Def.25},\ i.e. $\mathcal{T}_{g}(t)=\frac{a}{%
2(\gamma +1)}t+2\gamma \log (\gamma +1),$ is surprisingly linear in $t.$ As
a consequence, we can conclude that the generalized Fokker-Planck
equation (\ref{rel3}), with the operator $\mathcal{L}_{t}^{g}$ in Def.\ref{Def.22} specialized with%
\begin{equation*}
D_{t}^{g}u(t)=\int_{0}^{t}\frac{d}{ds}u(t-s)e^{-as}ds,
\end{equation*}%
is satisfied by the transition density of a standard OU process under a
linear combination of the time argument.

Finally, since (\ref{ll2}) is clearly positive definite, the stationary
version of the previous process is the centered Gaussian process with
covariance
\begin{equation*}
r(s)=\frac{\mathcal{\theta }}{\gamma }\frac{e^{-\frac{a\gamma }{\gamma +1}%
|s|}}{\gamma +1},
\end{equation*}%
for any $s\in \mathbb{R},$ $\gamma ,\theta >0.$
\end{example}

\qquad

\end{document}